\documentclass[12pt, a4paper, twosided, reqno]{amsart}
\usepackage{enumerate, cite}
\usepackage{hyperref}

\newtheorem{theorem}{Theorem}
\newtheorem{lemma}{Lemma}

\newtheorem{example}{Example}

\allowdisplaybreaks
\author[N. Gahlian and  S. Kumar ]{Nidhi Gahlian and Sanjay Kumar }

\address{nidhi gahlian; department of mathematics, university of delhi, delhi-110007, india.}
\email{nidhigahlyan81@gmail.com}
\address{Sanjay Kumar Pant; department of mathematics, deen dayal upadhyaya college, university of delhi, new delhi-110078, india.}
\email{skpant@ddu.du.ac.in}

\thanks {Research work of the first author is supported by research fellowship from Department of Science and Technology(INSPIRE), New Delhi, India, IF-190674.}
\title[Meromorphic Solutions]{Meromorphic Solutions of Certain Kind of  Non-Linear Differential Equation with Finite Sums of Exponential functions}
\subjclass[2020]{30D35, 34M05, 39A05}
\keywords {Exponent of convergence,  Exponential sums, Meromorphic solutions, Nevanlinna theory, Order of a function}
\begin{document}
	\maketitle
	
	\begin{abstract}
	We study the existence and nonexistence of meromorphic solutions of nonlinear differential equations involving differential polynomials and finite sums of exponential-type functions with entire coefficients of controlled growth. Employing Nevanlinna theory, we obtain structural results showing that any admissible meromorphic solution must reduce to an exponential-type function governed by the dominant exponential term. The obtained lemma and main result strengthen several existing results in nonlinear complex differential equations with exponential terms.
	
	\end{abstract}
	\section{\textbf{Introduction}}

	The study of nonlinear complex differential equations and their meromorphic solutions has been an active area of research in value distribution theory. A central theme in this direction is to understand how the growth of solutions is influenced by the structure of the differential equation. More broadly, the theory of differential equations seeks to determine the qualitative nature of solutions, including their existence, non-existence, growth properties, order of growth, and possible forms. 
	 An important extension of the Tumura–Clunie theorem was given by Hayman \cite{hay} in $1964$, which has significantly contributed to the study of differential polynomials in meromorphic functions.	
	Hayman \cite{hay} considered the following  non linear differential
	difference equations\begin{equation}\label{ha}
		f^n +Q_d(f(z)) = g(z),
	\end{equation}
	where, $Q_d(f(z))$ is a  differential polynomial
	in $f(z)$ with degree $d$ and 
	obtained the following result.
	\begin{theorem}\label{a}\cite{hay}
		 If non-constant meromorphic functions $f (z)$ and $g(z)$ satisfy
		$N(r, f)+ N(r,\frac{1}{g}) = S(r, f)$ and $d \leq n-1$ in \eqref{ha}, then $g(z)=(f(z)+\gamma(z))^n$,
		where $\gamma(z)$ is a meromorphic function and a small function of $f (z)$.
	\end{theorem}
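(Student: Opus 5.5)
The plan is to follow the classical Clunie--Hayman route: first produce a good candidate for $\gamma$ from the derivative of $g$, then show that the difference $g-(f+\gamma)^n$ must vanish identically by a counting argument.

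\textbf{Small functions.} Write $T(r,f)$ for the Nevanlinna characteristic. From \eqref{ha} and $d\le n-1$, the standard estimate $T(r,Q_d(f))\le d\,T(r,f)+S(r,f)$ gives
\[
n\,T(r,f)\le T(r,g)+(n-1)T(r,f)+S(r,f),
\]
and conversely. Hence $T(r,g)$ and $T(r,f)$ are comparable, and $S(r,g)=S(r,f)$.

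\textbf{Differentiating to isolate $\gamma$.} Differentiate \eqref{ha} and eliminate $g$. Since $g'=\frac{g'}{g}\,g$, we get
\[
f^{n-1}\Bigl(nf'-\tfrac{g'}{g}f\Bigr)=\tfrac{g'}{g}\,Q_d(f)-Q_d(f)'=:P(f),
\]
a differential polynomial of degree at most $n-1$ whose coefficients are small functions. By the logarithmic derivative lemma, $m(r,g'/g)=S(r,g)$. The hypothesis $N(r,f)+N(r,1/g)=S(r,f)$ gives $N(r,g'/g)\le \overline N(r,g)+\overline N(r,1/g)=S(r,f)$, because poles of $g$ come only from poles of $f$. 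So $g'/g$ is small.

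Set $\varphi=nf'-\frac{g'}{g}f$. There are two cases.
\begin{enumerate}[(i)]
\item If $\varphi\equiv0$, then $f^n=cg$. This corresponds to $\gamma\equiv0$ after adjusting; one then checks $Q_d\equiv(1/c-1)f^n$, which forces consistency via degree comparison.
\item If $\varphi\not\equiv0$, Clunie's lemma applied to $f^{n-1}\varphi=P(f)$ gives $m(r,\varphi)=S(r,f)$ and $m(r,f\varphi)=S(r,f)$. Together with the smallness of $N(r,\cdot)$, this makes $\varphi$ a small function.
\end{enumerate}

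\textbf{Candidate for $\gamma$.} In case (ii) we have $nf'/f-g'/g=\varphi/f$. This suggests comparing $f$ against the zeros of $\varphi$ and writing $f=\varphi/(nf'/f-g'/g)$. The more natural route is Hayman's: expand $(f+\gamma)^n$ and choose $\gamma$ so that the $f^{n-1}$ terms match. Concretely, $\gamma$ should equal $1/n$ times the coefficient of $f^{n-1}$ in $Q_d$ when $d=n-1$ (interpreting derivative terms through the small functions $f^{(k)}/f$). Then define
\[
h=g-(f+\gamma)^n=Q_d(f)-\sum_{k\ge1}\binom nk\gamma^k f^{n-k}.
\]
This $h$ is a differential polynomial in $f$ of degree at most $n-1$.

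\textbf{Showing $h\equiv0$ (the main obstacle).} Suppose $h\not\equiv0$. Apply the Milloux/Hayman-type second main theorem to
\[
g=(f+\gamma)^n+h,
\]
using $N(r,1/g)=S(r,f)$ and $N(r,f)=S(r,f)$ together with the three-small-functions theorem. This yields
\[
n\,T(r,f)\le \overline N\bigl(r,1/(f+\gamma)\bigr)+\overline N(r,1/g)+S(r,f)\le T(r,f)+S(r,f),
\]
since $h$ has degree at most $n-1$, so at zeros of $f+\gamma$ its contribution is controlled by $(n-1)T(r,f)$. The careful bookkeeping is needed precisely here. For $n\ge2$ this is a contradiction.

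Controlling $h$'s contribution is the delicate step. I would first try the Weierstrass-style trick of differentiating $g/(f+\gamma)^n$ and counting multiplicities. This forces every zero of $f+\gamma$ to have multiplicity, up to $S(r,f)$, that is incompatible with $h\not\equiv0$. We then conclude $h\equiv0$, that is, $g=(f+\gamma)^n$.
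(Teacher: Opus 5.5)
The paper gives no proof of this statement; it only quotes it from Hayman as background. Your proposal therefore has to stand on its own, and it has a genuine gap in its second half.

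The opening is standard and correct: comparability of $T(r,g)$ and $T(r,f)$, the identity $f^{n-1}\varphi=\frac{g'}{g}Q_d-Q_d'$, smallness of $g'/g$, and Clunie's lemma giving $T(r,\varphi)=S(r,f)$ when $\varphi\not\equiv0$. The gap is in everything after that.

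First, your candidate $\gamma$ is built by treating $f^{(k)}/f$ as small functions, which they are not. For $f=e^z+1$ one has $f'/f=e^z/(e^z+1)$ and $T(r,f'/f)=T(r,f)+O(1)$. Take the admissible instance $g=f^2-f-f'=(f-1)^2=e^{2z}$. Your recipe gives $\gamma=-\frac12\bigl(1+\frac{e^z}{e^z+1}\bigr)$, which is neither small nor the correct $\gamma=-1$.

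Second, and decisively, the step $h\equiv0$ rests on the inequality $nT(r,f)\le\overline{N}(r,1/(f+\gamma))+\overline{N}(r,1/g)+S(r,f)$. No second main theorem supplies this. Neither $g$ nor $h$ is a small function of $f$, so the three-small-functions theorem has nothing to act on. As a general consequence of $h\not\equiv0$ the inequality is even false. Take $f=e^z$, $n=3$, $Q_d(f)=f$; then $\gamma=0$, $h=f$, $g=e^{3z}+e^z$, and the inequality would read $3T(r,f)\le 0+2T(r,f)+S(r,f)$. The multiplicity argument you sketch afterwards is only a suggestion, so the conclusion is never actually derived.

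The missing idea is already in your hands. Once $\varphi$ is small (or $\equiv0$), you have $f'=\frac1n(\beta f+\varphi)$ with $\beta=g'/g$ small. Inductively $f^{(k)}=a_kf+b_k$ with small $a_k,b_k$. Hence $g=f^n+c_{n-1}f^{n-1}+\cdots+c_0$ is a genuine polynomial in $f$ with small coefficients; put $\gamma=c_{n-1}/n$.

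Now differentiate this expression, substitute for $f'$, use $g'=\beta g$, and compare coefficients of $f^{n-1}$. This is legitimate by Valiron--Mohon'ko: a polynomial in $f$ with small coefficients that vanishes identically has all coefficients zero. The comparison gives $n\gamma'=\beta\gamma-\varphi$, hence $(f+\gamma)'=\frac{\beta}{n}(f+\gamma)$. So $\bigl((f+\gamma)^n/g\bigr)'\equiv0$, i.e.\ $g=C(f+\gamma)^n$, and the leading coefficient forces $C=1$.

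No counting of zeros of $f+\gamma$ is needed. The hypothesis on $N(r,1/g)$ enters only through the smallness of $g'/g$. Your case (i) is the special case $\varphi\equiv0$ of this argument, where one gets $\gamma\equiv0$.
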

	 Theroem \ref{a} is an extension of Tumura-Clunie theory, which originated  from a  theorem proposed by Tumura and subsequently proved  by J. Clunie. Since then, nonlinear differential equations have attracted considerable attention, leading to numerous extensions and refinements of the original theory. 
	   Early studies focused on the case $g(z)=u(z)e^{\nu(z)}$, where $\nu(z)$ and $u(z)$ are polynomials. 
	 	In $2004$, Yang and Li \cite{yangcc} investigated the differential equation
	 \begin{equation*}
	 	4f^3+3f''=-\sin3z
	 \end{equation*} and proved that it admits exactly three non-constant entire solutions. Since the function $-\sin3z$ can be expressed as a linear combination of exponential functions, its right-hand side is of exponential type. This observation naturally  motivated the study of nonlinear differential equations whose right-hand sides are finite sums of exponential terms. Beginning with right hand sides of the type
	 $p_1(z)e^{\alpha z}+p_2(z)e^{-\alpha z}$, the theory was gradually extended to  $g(z)=p_1(z)e^{\alpha_1 z}+p_2(z)e^{\alpha_2 z},$ and eventually to finite sums of exponential terms with polynomial coefficients. one can see \cite{bo,chen,ch,wen,ccy}. \\A natural extension of these studies is to ask what happens  when the  dominant term $f^n$ is replaced by $f^nf'$? The study of nonlinear differential equations involving the term $f^nf'$ originates from the theory of differential polynomials in the work of Hayman. The modern form 
	\begin{equation}\label{x}
		f^{n}f^{'}+Q_{d}(z,f)=u(z)e^{\nu(z)},
	\end{equation}  
	was investigated in detail in recent decades in connection with meromorphic solutions of nonlinear differential equations, such as the order of growth, and the characterization of possible forms of solutions. 
		In $2014$ Liao and Ye \cite{jl} systematically investigated  the nonlinear differential equation \eqref{x} and established the following theorem.
	\begin{theorem}\label{2a}\cite{jl} Let $Q_d(z,f)$ be a differential polynomial in $f(z)$ of degree $d$ with rational function coefficients. Suppose that   $u(z)$ is a non-zero rational function, $\nu(z)$ is a non constant polynomial.If $d \leq n-1$, and the differential equation \eqref{x}	admits a meromorphic solution  with finitely many poles, then $f(z)$ has the following form:
		$$Q_{d}(z,f)\equiv0, \qquad f(z)=r(z)e^{\nu(z)/n+1},$$  
	where	$r(z)$ is a rational function satisfying $r^{n}((n+1)r'+\nu^{'}r)=(n+1)u$.
	\end{theorem}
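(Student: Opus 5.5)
We prove Theorem \ref{2a} by a Clunie/Hayman-type argument on the auxiliary function
$$g(z)=u(z)e^{\nu(z)}.$$
We use the standard notation $S(r,f)=o(T(r,f))$ outside an exceptional set of finite measure.

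\textbf{Step 1: transcendence and small functions.} Since $f$ has finitely many poles, $f$ is a rational function times an entire function. If $f$ were rational, the left side of \eqref{x} would be rational while $ue^{\nu}$ is transcendental, which is impossible. So $f$ is transcendental. Consequently every rational function, and $\nu'$, is a small function of $f$. Also $N(r,f)=O(\log r)=S(r,f)$.

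\textbf{Step 2: differentiate and eliminate $e^{\nu}$.} Differentiating \eqref{x} and using $g'/g=u'/u+\nu'=:\beta$ (a rational function) gives
$$\bigl(f^nf'\bigr)'-\beta f^nf'=\beta Q_d-Q_d'.$$
Write $P(f):=(f^nf')'-\beta f^nf'=f^{n-1}\bigl(nf'^2+ff''-\beta ff'\bigr)$. Set $\varphi:=nf'^2+ff''-\beta ff'$, so that
$$f^{n-1}\varphi=\beta Q_d-Q_d'.$$
The right side is a differential polynomial in $f$ of degree at most $d\le n-1$ with small coefficients. The Clunie lemma gives $m(r,\varphi)=S(r,f)$. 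Since $\varphi$ has only finitely many poles, $N(r,\varphi)=S(r,f)$ as well, so $T(r,\varphi)=S(r,f)$.

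\textbf{Step 3: show $\varphi\equiv0$.} This is the main obstacle. Suppose $\varphi\not\equiv0$. We compare $f^{n-1}\varphi=\beta Q_d-Q_d'$ with \eqref{x} itself. From \eqref{x} we have $f^nf'=g-Q_d$. Multiplying the relation $f^{n-1}\varphi=\beta Q_d-Q_d'$ suitably by $f$ (or by $f'$) and combining it with \eqref{x}, we express $g$ as $f^{n-1}\cdot(\text{small-coefficient polynomial of degree }2\text{ in }f,f',f'')$ plus a lower-degree differential polynomial. Applying the Clunie lemma again, together with the fact that $g$ has no zeros except the finitely many zeros of $u$, we should reach the contradiction $T(r,f)=S(r,f)$. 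The most direct version writes $f'/f$ via the identity
$$\varphi/f^2=n(f'/f)^2+f''/f-\beta f'/f$$
and uses the logarithmic derivative lemma to get $2m(r,1/f)\le m(r,1/\varphi)+S(r,f)=S(r,f)$. Thus $f$ takes the value $0$ "often"; combining this with \eqref{x} at the zeros of $f$ (where the left side reduces to $Q_d$) should force a contradiction. I expect this step to need the most care.

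\textbf{Step 4: integrate $\varphi\equiv0$.} The equation $\varphi\equiv0$ says exactly that $(f^nf')'=\beta f^nf'$, so $f^nf'=Cg$ for some constant $C$. Substituting into \eqref{x} gives $Q_d\equiv(1-C)g$. A differential polynomial in $f$ of degree $\le n-1$ cannot equal a nonzero multiple of $g=f^nf'/C$ by a Clunie/Hayman degree count, so $C=1$ and $Q_d\equiv0$.

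Then $\bigl(f^{n+1}\bigr)'=(n+1)ue^{\nu}$. Since $f$ has finitely many poles, $f^{n+1}$ has the form $R(z)e^{\nu(z)}$ with $R$ rational; the additive constant of integration must vanish, because otherwise $f^{n+1}$ would have infinitely many zeros of multiplicity not divisible by $n+1$, which is incompatible with its being a $(n+1)$-st power. Taking $(n+1)$-st roots gives $f=re^{\nu/(n+1)}$ with $r$ rational. Substituting back into $f^nf'=ue^{\nu}$ yields
$$r^n\bigl((n+1)r'+\nu'r\bigr)=(n+1)u,$$
as claimed.
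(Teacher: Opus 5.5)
The paper gives no proof of this statement; it is quoted from Liao and Ye. So your argument has to stand on its own, and it does not. Step~3, showing $\varphi\equiv0$, is the whole content of the theorem, and you only describe what you hope will happen there. What you actually establish is $T(r,\varphi)=S(r,f)$ and $m(r,1/f)=S(r,f)$, and these cannot produce a contradiction by themselves. Evaluating \eqref{x} at a zero $z_0$ of $f$ gives only $Q_d(z_0,f)=u(z_0)e^{\nu(z_0)}$. That is harmless, because $Q_d$ may contain monomials in $f',f'',\dots$ alone. Writing $\varphi g=\varphi Q_d+ff'(\beta Q_d-Q_d')$ just recombines the two identities you already have, and Clunie's lemma says nothing about an identity whose left-hand side is the non-small function $g$. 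To see that your ingredients are genuinely insufficient, run the same scheme on Hayman's equation $f^n+Q_{n-1}=g$ of Theorem~\ref{a}. Take $n=2$, $f=e^z-1$, $Q_1=2f+1$, $g=e^{2z}$, $\beta=2$. The analogue of your $\varphi$ is $2f'-\beta f\equiv 2$, which is small and nonzero, while $m(r,1/f)=S(r,f)$ and $Q_1(z_0,f)=g(z_0)$ at every zero of $f$. Any proof that $\varphi\equiv0$ must therefore use the specific structure of $f^nf'$ (for instance, that a shift $f\mapsto f+\gamma$ creates the degree-$n$ terms $\gamma' f^n+n\gamma f^{n-1}f'$), and your sketch never does.

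When $d\le n-2$ the gap closes with the device the paper uses in Case~(A)(b) of its Theorem~4. Read $f^{n-1}\varphi=\beta Q_d-Q_d'$ as $f^{n-2}(f\varphi)=\beta Q_d-Q_d'$, whose right side has degree at most $n-2$. Clunie's lemma then gives $m(r,f\varphi)=S(r,f)$ (for $n=2$ the right side is rational and this is immediate). Hence $m(r,f)\le m(r,f\varphi)+m(r,1/\varphi)=S(r,f)$, so $T(r,f)=S(r,f)$, a contradiction. This is why the paper's own Theorem~4 works with $d\le n-m-1$, i.e.\ $d\le n-2$ when $m=1$. The boundary case $d=n-1$, which the statement includes, is left unproved by your proposal and needs a genuinely additional Tumura--Clunie-type argument.

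There is also a smaller problem in Step~4. Finitely many poles do not give $f^{n+1}=Re^{\nu}+c$ with $R$ rational (think of $\int e^{z^2}\,dz$). Argue instead that $f^nf'=Cue^{\nu}$ with $C\ne0$ forces $f$ to have only finitely many zeros, so $f=R_0e^{h}$ with $R_0$ rational and $h$ entire. Then $e^{(n+1)h-\nu}=Cu/\bigl(R_0^n(R_0'+R_0h')\bigr)$, and the logarithmic derivative lemma forces $(n+1)h-\nu$ to be constant. Now $Q_d(z,f)$ is a combination of $e^{j\nu/(n+1)}$, $0\le j\le n-1$, with rational coefficients, and it equals $(1-C)ue^{\nu}$. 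Borel's lemma gives $C=1$ and $Q_d\equiv0$, which also replaces your unexplained ``degree count''.
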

	Theorem \ref{2a} extends an earlier result of Hayman \cite{wk} and has inspired extensive subsequent research in this direction. In particular, considerable attention has been given to nonlinear differential equations whose right-hand sides are exponential polynomials. one can see \cite{zh,jf} and the references therein.

	
	Recently, in $2023$, Chen and Feng \cite{jf} posed the following question.\\
	\textbf{Question:} What can be said for the solutions if the dominant term of the equation \eqref{ha}  is changed from $f^n$ to $f^nf'$, and the right side of the	equation \eqref{ha} contains more than two exponential terms$?$\\
	They subsequently provided an affirmative answer to this question and established the following theorem.
	

	\begin{theorem}\label{hb}\cite{jf}
Let $n\geq 3$, $t\geq 0$ and $m\geq 1$ be integers, $n\geq m$, and $P(z,f,f',...,f^{(t)})$ be differentail polynomial in $f(z)$ of degree $d\leq n$ with small functions of $f(z)$ as its coefficients. Suppose that $P_i$ and $\alpha_i$ are nonzero constants for $i=1,2,...,m,$ and $|\alpha_1|>|\alpha_2|>...>|\alpha_m|$. If $f(z)$ is a meromorphic solution of the differential equation
\begin{equation}\label{3a}
	f^nf'+P(z,f,f',...,f^{(t)}) = P_1e^{\alpha_1z}+P_2e^{\alpha_2z}+...+ P_me^{\alpha_m z},
\end{equation}
then $f(z)=q_1e^{\frac{\alpha_z}{n+1}}$, where $q_1$ is a nonzero constant such that $q_1^{n+1}=\frac{(n+1)P_1}{\alpha_1}$, and $\alpha_1, \alpha_2, ..., \alpha_m $ are in one line.
	\end{theorem}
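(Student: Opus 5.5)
Let $F=P_1e^{\alpha_1z}+\dots+P_me^{\alpha_mz}$. The plan is to compare $f$ with the leading exponential term and then eliminate all the others.

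\emph{Growth of $f$.} From \eqref{3a} and the Clunie lemma (valid since $d\le n$ is compared against the degree $n+1$ of $f^nf'$), we get $T(r,f)=O(T(r,F))+S(r,f)=O(r)$. Conversely $T(r,F)\le (n+1)T(r,f)+O(T(r,f))+S(r,f)$. So $f$ has order exactly $1$, and $S(r,f)=o(r)$.

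\emph{Poles of $f$.} At a pole of $f$ of order $p$ that is not a pole of the coefficients, $f^nf'$ has a pole of order $(n+1)p+1$, while $P$ has a pole of order at most $n(p+1)\le(n+1)p+1$ since $p\ge1$. Equality would force $p=1$ and a top-weight term, so a Laurent-coefficient comparison is needed there. Since $F$ is entire, such poles must cancel, and this forces $N(r,f)=S(r,f)$.

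\emph{Reduction to a Riccati-type relation.} Differentiate \eqref{3a} and eliminate the dominant exponential: set $\Phi=F'-\alpha_1F=\sum_{i\ge2}P_i(\alpha_i-\alpha_1)e^{\alpha_iz}$. Then
$$f^{n-1}\big(nf'^2+ff''-\alpha_1ff'\big)=\Phi-P'+\alpha_1P.$$
I will show the left-hand factor $\psi=nf'^2+ff''-\alpha_1ff'$ satisfies $\psi\equiv0$. If $\psi\not\equiv0$, write $f^{n-1}\psi=H$, where $H$ is a differential polynomial of degree $\le n$ plus $\Phi$. Clunie's lemma then gives $m(r,\psi)=S(r,f)$ and $m(r,f\psi)=S(r,f)$. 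Combined with $N(r,f)=S(r,f)$, this makes $\psi$ and $f\psi$ small, hence $f$ is small unless $\psi\equiv0$, a contradiction. Here $\Phi$ must be handled: $T(r,\Phi)$ is comparable to $T(r,f)$, so the cleaner route is to iterate. Multiply by suitable factors and successively annihilate $e^{\alpha_2z},\dots$ by the operators $(D-\alpha_i)$, obtaining $f^{n-m}\Psi=\tilde P$ with $\tilde P$ of degree $\le n$. This is where $n\ge m$ enters. Clunie's lemma applied to this gives $\Psi$ small, and a reverse analysis then yields $\psi\equiv0$. This step is the main obstacle. I expect the hard part to be verifying that the degree bookkeeping after applying $\prod_{i\ge2}(D-\alpha_i)$ stays within the Clunie hypothesis. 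I would first try an alternative: use Borel's theorem on the identity obtained by writing $f^{n+1}=$ a solution of $(f^{n+1})'/(n+1)=F-P$.

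\emph{Conclusion.} Once $\psi\equiv0$, the function $g=f^{n+1}$ satisfies $g''=\alpha_1g'$, so $f^{n+1}=c_0+ce^{\alpha_1z}$. Since $f$ has few poles, and the zeros of $c_0+ce^{\alpha_1z}$ are simple but must have multiplicity divisible by $n+1$, we get $c_0=0$. Hence $f=q_1e^{\alpha_1z/(n+1)}$. Substituting gives $P=F-\frac{\alpha_1}{n+1}q_1^{n+1}e^{\alpha_1z}$. The left side is a polynomial in $e^{\alpha_1z/(n+1)}$ of degree $\le n$ with small coefficients. Borel's lemma then forces $q_1^{n+1}=(n+1)P_1/\alpha_1$. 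It also forces each $\alpha_i$ with $i\ge2$ to equal $k\alpha_1/(n+1)$ with $0\le k\le n$, so all the $\alpha_i$ lie on one line through $0$.
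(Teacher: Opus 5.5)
\textbf{There is a genuine gap, and it cannot be closed, because the statement is false.} The step that breaks is the one you flag as the main obstacle: the Clunie argument.

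After applying $(D-\alpha_1)\cdots(D-\alpha_m)$ you get $f^{n-m}\Psi=\tilde P$ with $\tilde P=-\prod_i(D-\alpha_i)P$. By Lemma~\ref{imple10} these operators do not lower the degree, so $\deg\tilde P$ can be as large as $d$. Clunie's lemma needs the right-hand side to have degree at most the exponent of the factored power, i.e.\ $d\le n-m$. The hypothesis $n\ge m$ only guarantees $n-m\ge 0$, and for $n=m$ the factor is $f^{0}$, so nothing follows at all. Already for $m=1$, your first reduction $f^{n-1}\psi=\alpha_1P-P'$ needs $d\le n-1$, not $d\le n$.

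Your pole count has a similar flaw. The inequality $n(p+1)\le (n+1)p+1$ holds only for $p\ge n-1$; it fails for $p=1$, $n\ge 3$. Moreover, monomials containing $f^{(k)}$ with $k\le t$ can have pole order up to $n(p+t)$. So the requirement that poles cancel does not yield $N(r,f)=S(r,f)$.

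This is the point the paper makes about the statement. The function $f=e^{2z}+1$ satisfies
\[
f^3f'-f^2+1=2e^{8z}+6e^{6z}+5e^{4z}.
\]
This is the paper's Example~\ref{re}, with the sign in front of $f^2$ corrected; as printed, with $+f^2$, the identity does not hold. Here $n=m=3$, $d=2$, the coefficients are constants, and $|8|>|6|>|4|$. Yet $\psi=3f'^2+ff''-8ff'=-12e^{2z}\not\equiv 0$, and $f$ is not of the form $q_1e^{2z}$.

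Even for $m=1$, the function $f=e^{z}+1$ solves
\[
f^3f'-3f^3+6f^2-4f+1=e^{4z},
\]
with $d=n=3$. So no ``reverse analysis'' can produce $\psi\equiv 0$ under $d\le n$, and the alternative via Borel's lemma must fail for the same reason.

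The paper does not prove this statement. Its Theorem 4 is the corrected version, with $d\le n-m-1$, $n\ge m+2$, and $N(r,f)=S(r,f)$ taken as a hypothesis. These are exactly what make the final identity $f^{n-m}R=(\text{degree}\le n-m-1)$ amenable to Clunie's lemma, giving both $m(r,R)=S(r,f)$ and $m(r,fR)=S(r,f)$.
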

However, we observed  theorem \ref{hb} contains a gap, and its conclusion is contradicted by following  example.
\begin{example}\label{re} The differential equation
		$$f^3f'+f^2+1=2e^{8z}+6e^{6z}+5e^{4z}$$ \text{has solution} $f(z)=e^{2z}+1$.
		Here, $n=3, m=3, \; \text{and} \; d=2.$
\end{example}
This example \ref{re} satisfies all the assumptions of the theorem; however, its solution is not of the form described in the conclusion of the theorem.
\begin{example}
	The function $f(z)=e^z+z$ is a solution of the following differential equation
	\begin{equation}\label{re3}
		f^3f'+z^2f+z^3f'-z^3=e^{4z}+(z+1)e^{3z}+(z^2+z)e^{2z},
	\end{equation}
	here $n=3$, $d=1$, and $m=3$.
\end{example}
In Example $2$, the solution does not belong to the class described in Theorem \ref{hb}, even though all the hypotheses of the theorem are satisfied. Nevertheless, the examples examined indicate a recurring structure in the solutions. In particular, they often take the form $f(z)=q(z)e^{\frac{\alpha_z}{n+1}} +p(z)$, where $p(z)$ is any polynomial. This observation suggests that functions of this form may naturally arise as solutions under the conditions of Theorem 1.
If the condition $|\alpha_1|>|\alpha_2|>...>|\alpha_m|$ is relaxed, then  solutions of different form may arise, as illustrated by following example.
\begin{example}
	The function $f(z)=e^z+e^{-z}$ is a solution of the following differential equation
	$$f^3f'-2f''f+4=e^{4z}-e^{-4z}-4e^{-2z}.$$
\end{example} 

A natural question arises:\\
\textbf{Question 1:} What can be said for the solutions if the  right side of the equation \eqref{3a}  contains more than two exponential terms with order $q$?\\
A key contribution of this paper is a new lemma, developed in Section 2, concerning the nonexistence of finite-order meromorphic solutions of certain exponential-type differential equations. This lemma plays a pivotal role in controlling the growth behaviour of possible solutions.
 \\
	Motivated by the above discussions, we establish the following result, which bridges the gap in Theorem \ref{hb}, improves it, and also provides an affirmative answer to Question 1.
	
	\begin{theorem}
		Let $m \geq 1$, $n \geq m+2$, $t \geq 0$, and $q \geq 1$ be integers, and a differential polynomial	$P\left(z,f,f',\ldots,f^{(t)}\right)$ in $f(z)$ of degree $d \leq n-m-1$ whose coefficients are small functions of $f(z)$. Let $w_i$ $(i=0,1,\ldots,m)$ be nonzero constants such that 
		$|w_1|>|w_2|>...>|w_m|$ and  $B_i(z)$ $(i=0,1,\ldots,m)$ be entire functions of order less than $q$, with $B_i(z)\not\equiv 0, \; i=1,2,\ldots,m$. If $f(z)$ is a meromorphic solution of the differential equation
		\begin{equation}\label{eq:main}
			f^n(z)f'(z)+P\bigl(z,f,f',f'',\ldots,f^{(t)}\bigr)
			=
			B_0(z)+B_1(z)e^{w_1 z^q}
			+\cdots+
			B_m(z)e^{w_m z^q}
		\end{equation}
		satisfying
		$N(r,f)=S(r,f)$
		then $f(z)=q(z)e^{\frac{w_1}{n+1}z^q},$
		where $q(z)$ is an entire function satisfying $q(z)=((n+1)K)^{\frac{1}{n+1}}$ with $\rho(K)<q$, $\int B_1(z) e^{w_1z^q}dz=e^{w_1z^q}K(z)$, and $w_1,w_2,\ldots,w_m$ are in one line.
	\end{theorem}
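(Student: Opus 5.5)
The plan follows the standard strategy for equations of this type: reduce \eqref{eq:main} to a linear first-order equation in $F=f^{n+1}$ plus a small perturbation, and then use the exponent $w_1$ with largest modulus to isolate the dominant term.

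\textbf{Step 1 (growth).} First I establish that $T(r,f)$ is comparable to $r^q$. Write $g(z)=B_0+\sum_{i=1}^m B_ie^{w_iz^q}$. Since $\rho(B_i)<q$ and the $w_i$ are distinct and nonzero, Borel-type results give $T(r,g)\asymp r^q$.

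From \eqref{eq:main} and $N(r,f)=S(r,f)$, the Clunie-type estimate with $d\le n-1$ gives
\[
(n+1)T(r,f)\le T(r,g)+S(r,f).
\]
Conversely, $T(r,g)\le (n+1+d)T(r,f)+S(r,f)$. Hence $\rho(f)=q$, and every function of order $<q$ (in particular each $B_i$) is small with respect to $f$.

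\textbf{Step 2 (kill the exponentials one at a time).} Differentiate \eqref{eq:main} and eliminate the top exponential. Set $L_1(h)=h'-(qw_1z^{q-1}+B_1'/B_1)h$, which annihilates $B_1e^{w_1z^q}$. Applying the operators $L_1,\dots,L_m$ successively annihilates all the terms $B_ie^{w_iz^q}$, leaving an identity in which the right side is a small function. On the left, $f^nf'$ contributes $(f^{n+1})'/(n+1)$ acted on by an $m$-th order operator. This yields
\[
f^{n-m}\,\Phi(z,f)=\text{differential polynomial of degree}\le d+m\le n-1,
\]
where $\Phi$ is a differential polynomial of degree $m+1$ in $f$. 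Clunie's lemma (using $n\ge m+2$ and $d\le n-m-1$) then gives $m(r,\Phi)=S(r,f)$ and, since the poles are few, $T(r,\Phi)=S(r,f)$. I expect this to be the content of the paper's new lemma (Section~2) excluding other finite-order solutions.

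I would then argue that $\Phi\equiv 0$ unless all but the $w_1$ component is absent. This forces $f^{n+1}$ to satisfy $(f^{n+1})'-qw_1z^{q-1}f^{n+1}=\text{small}$. Integrating gives $f^{n+1}=e^{w_1z^q}(\kappa(z)+c)$ with $\rho(\kappa)<q$. In addition, the remaining relation $(n+1)P=(n+1)\bigl(g-f^nf'\bigr)$ must have $T(r,P)$ small. Comparing with $\deg P\le n-m-1$, this forces $P$ and the lower exponential terms to balance.

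\textbf{Step 3 (identify the form).} With $F=f^{n+1}$, substitute $F'=(n+1)f^nf'$ into \eqref{eq:main}:
\[
F'/(n+1)+P=g.
\]
Since $f^{n+1}=e^{w_1z^q}H$ with $H$ small and $P$ has degree $<n$, comparing Nevanlinna characteristics and using Borel's lemma on the linear combination of $e^{w_iz^q}$ with small coefficients gives $H'+qw_1z^{q-1}H=(n+1)B_1$. Hence $H=(n+1)K$ with $\int B_1e^{w_1z^q}\,dz=e^{w_1z^q}K$. Taking $(n+1)$-st roots (possible since $f$ is meromorphic and $H$ must then be an $(n+1)$-st power, entire because $N(r,f)$ is small and zeros of $H$ must have multiplicity divisible by $n+1$) gives $f=q(z)e^{\frac{w_1}{n+1}z^q}$.

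The collinearity of $w_1,\dots,w_m$ comes from the remaining equation. Now $P(z,f,\dots)$ is a sum of terms $c(z)e^{\frac{jw_1}{n+1}z^q}$ with $0\le j\le d$, and it must equal $B_0+\sum_{i\ge2}B_ie^{w_iz^q}$. Borel's theorem then forces each $w_i=\frac{j}{n+1}w_1$, so all $w_i$ are real multiples of $w_1$.

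\textbf{Main obstacle.} The hard step is Step 2, i.e.\ showing rigorously that the auxiliary $\Phi$ vanishes, so that $f^{n+1}$ has only the single exponential $e^{w_1z^q}$ with small coefficient. Example~\ref{re} shows precisely that this fails when $d$ is large. I would first try to prove it via Clunie's lemma together with the degree gap $n-(d+m)\ge 1$, and if that is insufficient, argue by contradiction with the Section~2 lemma on nonexistence of finite-order solutions.
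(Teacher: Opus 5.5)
Your skeleton (successive elimination of the exponentials, then Clunie's lemma) is the paper's, with the composite operator $L=L_m\cdots L_1$ replacing its nested case analysis of $Q_k,T_k$. The proposal breaks down at the decisive step.

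First, a correction. $L$ applied to $P$ and $B_0$ gives a differential polynomial of degree at most $d\le n-m-1$, not $d+m$: derivatives and small-coefficient combinations do not raise the degree, which is the paper's lemma on $\alpha P+\beta P^{(k)}$. With your bound $\le n-1$, Clunie's lemma would not apply to $f^{n-m}\Phi$ when $m\ge 2$. With the correct bound you get $m(r,\Phi)=S(r,f)$, and also $m(r,f\Phi)=S(r,f)$ by writing $f^{n-m-1}(f\Phi)=\cdots$. So $\Phi\not\equiv0$ would give $T(r,f)\le m(r,f\Phi)+T(r,\Phi)+N(r,f)+O(1)=S(r,f)$, and hence $\Phi\equiv0$.

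But $\Phi\equiv0$ only says $L(f^nf')\equiv0$, i.e.
\[
f^nf'=\sum_{i=1}^m c_iB_ie^{w_iz^q},\qquad P=B_0+\sum_{i=1}^m(1-c_i)B_ie^{w_iz^q}
\]
for some constants $c_i$; nothing singles out $w_1$. Your sentence ``$\Phi\equiv0$ unless all but the $w_1$ component is absent'' just asserts the conclusion. The relation $(f^{n+1})'-qw_1z^{q-1}f^{n+1}=\text{small}$ is false for the expected solutions: if $f^{n+1}=He^{w_1z^q}$ it equals $H'e^{w_1z^q}$, which is not small unless $H$ is constant (e.g.\ $f=ze^z$ in the paper's example). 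Nor is $T(r,P)$ small in general, since at the end $P=B_0+\sum_{i\ge2}B_ie^{w_iz^q}$.

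What is missing is (i) that at most one $c_i$ is nonzero, and (ii) that it is $c_1$, with $c_1=1$. Step (i) is what the paper's Lemma 6 is for (used in Subcases B(ii)--(iv)), not the Clunie step you assigned it to. However, Lemma 6 assumes $\lambda(f)<\rho(f)$, a condition on zeros that does not follow from $N(r,f)=S(r,f)$, so it cannot simply be cited.

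One way to close (i) directly:
\begin{enumerate}
\item $f$ is entire because $f^nf'$ is.
\item Suppose $I=\{i:c_i\ne0\}$ has $|I|\ge2$. Apply the first-order operators that kill all but one exponential $e^{w_{i_0}z^q}$, $i_0\in I$. This gives $f^{n-|I|+1}\Psi=\tilde Be^{w_{i_0}z^q}$ with $\tilde B\not\equiv0$ of order $<q$.
\item Hence the zeros of $f$ lie among zeros and poles of finitely many functions of order $<q$, so $f'/f$ is small.
\item Then $G'/G$ is small for $G=f^{n+1}\cdot(f'/f)=\sum_{i\in I}c_iB_ie^{w_iz^q}$.
\item Borel's lemma applied to $G'-(G'/G)G=0$ gives $(B_i/B_j)'/(B_i/B_j)=q(w_j-w_i)z^{q-1}$ for $i\ne j$ in $I$, contradicting $\rho(B_i/B_j)<q$.
\end{enumerate}

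For (ii): if $I=\{j\}$, the same argument gives $f^{n+1}=He^{w_jz^q}$ with $H$ small. Then $P$ only carries exponents $kw_jz^q/(n+1)$ with $0\le k\le d$, all of modulus $<|w_j|\le|w_1|$. Borel's lemma then forces $B_1\equiv0$ unless $j=1$, and for $j=1$ it forces $c_1=1$. Only after this does your Step 3 (root extraction and collinearity) go through.
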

	
	The following examples demonstrates the applicability of the theorem.
	\begin{example}
	 	For $n=3$ and $m=1$, the differential equation
		\begin{equation*}
			f^3f'+f'-2zf+6z^5+z= 6z^5+z+2ze^{4z^2},
		\end{equation*}
	admits the meromorphic solution $f(z)=e^{z^2}$. Here $q=2$, $d=1$, $P(z,f,...,f^{(t)})=f'-2zf+6z^5+z$,$B_0=6z^5+z$, $B_1=2z$, \text{and} $w_1=4$. 
	\end{example}

	The solution in the above example  is of the form of  $f(z)=q(z)e^{\frac{w_1}{n+1}z^q}$. Indeed,  $$\int 2z e^{4z^2}dz=\frac{1}{4}e^{4z^2},$$ which implies that $K(z)=\frac{1}{4}$. Consequently,  $q(z)=(4*\frac{1}{4})^{\frac{1}{4}}=1$. Hence, $f(z)=q(z)e^{\frac{w_1}{n+1}z^q}=e^{z^2}$.

	\begin{example}
		For $n=5$ and $m=2$, the differential equation
		\begin{equation*}
			f^5f'+f^2-(f')^2+z^2= z^2+(z^6+z^5)e^{6z}-(1+z^2)e^{2z},
		\end{equation*}
		admits the meromorphic solution $f(z)=ze^z$. Here $q=1$, $d=2$, $P(z,f,...,f^{(t)})=f^2-(f')^2+z^2$, $B_0=z^2$, $B_1=z^6+z^5$, $w_1=6$, $B_2=1+z^2$, \text{and} $w_2=2$. 
	\end{example}
	
	 Similarly, $$\int (z^6+z^5) e^{6z}dz=\frac{z^6}{6}e^{6z},$$ so that $K(z)=\frac{z^6}{6}$. Therefore, $q(z)=\left(6*\frac{z^6}{6}\right)^\frac{1}{6}=z$. Hence, $f(z)=q(z)e^{\frac{w_1z^q}{n+1}}=ze^z$.
		We have intuition that the condition $d \leq n-m-1$ in Theorem $4$ might be weakened to $d\leq n-m$. The following example shows that, in general, the bound cannot be improved beyond this. Indeed, when $m=3,\; n=5, \; \text{and} \; d=3=n-m+1$, all the remaining assumptions of Theorem $2$ are satisfied, but the conclusion fails.
	
	\begin{example} The function $f(z)=e^z+1$ is a solution of the following differential equation
		$$f^5f'-10f^3+25ff'+4f=-6+e^{6z}+5e^{5z}+10e^{4z}.$$
		\end{example}
	 This example $5$ satisfies all the assumptions of Theorem $2$ except that $d=3$, however, the solution is not of the form $f(z)=q(z)e^{\frac{w_1z^q}{n+1}}$.\\


	Throughout this paper, we assume that the reader is acquainted with the standard notations of Nevanlinna theory. For a meromorphic function $f$, $n(r,f)$, $N(r,f)$, $\Bar{N}(r,f) $, $m(r,f)$ and  $T(r,f)$ denote un-integrated counting function, integrated counting function, reduced counting function, proximity function and characteristic function respectively, see \cite{hay,ilpo,yanglo}.
	
	In section $2$ we state  some definitions, lemmas  and results. In section 3, we prove lemma and  theorem.
	\medskip
	
	\section{\textbf{Auxiliary results}}
	To maintain self-containment, we briefly recall the definitions of the order of growth 
	$\rho(f)$,  hyper-order of growth $\rho_{2}(f)$  and the exponent of convergence of zeros $\lambda(f)$ for a meromorphic function $f(z)$.
	
	$$\rho(f)=\limsup_{r\to\infty}\frac{\log T(r,f)}{\log r},$$
	$$\rho_2(f)=\limsup_{r\to\infty}\frac{\log \log T(r,f)}{\log r},$$
	and
	$$\lambda(f)=\limsup_{r\to\infty}\frac{\log n\left(r,\frac{1}{f}\right)}{\log r}.$$ 
	
	\bigskip
	
	For a meromorphic function $f$, Nevanlinna’s First Main Theorem asserts  that
	$$T\left(r,\frac{1}{f-a}\right)=T(r,f)+O(1),$$
	for all $a\in \mathbb{C}$,
	where $	O(1)$ denotes a bounded quantity depending only on $a$.
	\smallskip
	
	A meromorphic function $g(z)$ is a small function of $f(z)$ if $T(r,g)=S(r,f)$ and vice versa.
	For a meromorphic function $f(z)$,  $S(r,f)$ denotes the quantity satisfying $S(r,f)=o(T(r,f))$, as $r\to\infty$, outside  of a possible exceptional set $E$ (not necessarily same at each occurrence) of  finite linear measure. \\
	A differential polynomial $P$  is an expression formed by finite sum of monomials  involving $f$ and its derivatives. It has the general form
	\begin{equation*}
		P(z,f,f',...,f^{(t)})=\sum_{\lambda \in I}a_{\lambda}(z)f^{\lambda_0}(z)(f')^{\lambda_1}(z)...(f^{(t)})^{\lambda_t},
		\end{equation*}
		where $t$ is a fixed non negative integer, $ I$ is a finite 
		 set of multi-indices $\lambda:=(\lambda_0,\lambda_1,...,\lambda_t)$, $\lambda_j \in \mathbb{N}$ for each $j \in \{1,2,...,t\}$, and the coefficients $a_\lambda(z)$ are small functions of $f(z)$. For each term corresponding to $\lambda$, we define its weight (or total degree) as
		$d_\lambda:=\lambda_0+\lambda_1+...+\lambda_t.$ The degree of the differential polynomial $P$ is then defined 
	 as the largest such weight among all terms appearing in the sum: 	$$\deg P= \max_{\lambda \in I}(\lambda_0+\lambda_1+...+\lambda_t).$$	Borel’s lemma is a fundamental tool in applying Nevanlinna theory to complex differential equations.
	\begin{lemma}\label{imple1}(Borel's Lemma)\cite{cc}
		Suppose $f_1(z),f_2(z),...,f_n(z)(n\geq2)$ are meromorphic functions and $h_1(z),h_2(z),h_3(z),...,h_n(z)$ are entire functions satisfying:
		\begin{enumerate}
			\item $\sum_{i=1}^{\infty} f_i(z)e^{h_i(z)}\equiv 0$.
			\item For $1\leq i< k\leq n$, $h_i-h_k$ are  constants.
			\item  $1\leq i\leq n$, $1\leq m< k\leq n$,  $T(r,f_i(z))=o(T(r,e^{h_m-h_k}))$ as $r\rightarrow \infty$ outside of a set of finite linear measure.
		\end{enumerate}
		Then $f_i\equiv0\ (i=1,2,3...n)$.
	\end{lemma}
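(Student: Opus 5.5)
The statement as printed says that the differences $h_i-h_k$ are constants. That cannot be what is intended: if they were, the conclusion would fail. I will read hypothesis (2) as ``$h_i-h_k$ are \emph{not} constants for $1\le i<k\le n$'', which is the standard form in \cite{cc}. Under this reading each $e^{h_i-h_k}$ is transcendental, so $T(r,e^{h_i-h_k})/\log r\to\infty$.

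I would argue by contradiction with induction on $n$. First I discard the terms with $f_i\equiv0$. If at most one term survives, the identity forces that coefficient to vanish too. So I may assume every $f_i\not\equiv0$ and $n\ge2$.

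\emph{Base case $n=2$.} The identity gives $e^{h_1-h_2}=-f_2/f_1$. Hence
$$T(r,e^{h_1-h_2})\le T(r,f_1)+T(r,f_2)+O(1)=o\bigl(T(r,e^{h_1-h_2})\bigr)$$
outside a set of finite linear measure. This is impossible because $T(r,e^{h_1-h_2})\to\infty$.

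\emph{Inductive step.} I divide the identity by $f_ne^{h_n}$ to get
$$\sum_{i=1}^{n-1}\frac{f_i}{f_n}e^{h_i-h_n}\equiv-1,$$
and then differentiate to remove the constant. This yields
$$\sum_{i=1}^{n-1}g_i\,e^{h_i-h_n}\equiv0,\qquad g_i=\Bigl(\frac{f_i}{f_n}\Bigr)'+\frac{f_i}{f_n}\,(h_i'-h_n').$$
This is a relation with $n-1$ terms. The exponents $h_i-h_n$ still have pairwise non-constant differences $h_i-h_k$, so hypothesis (2) carries over.

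Next I check that no $g_i$ vanishes identically. If $g_i\equiv0$, then $\frac{f_i}{f_n}e^{h_i-h_n}$ is a constant $c$. If $c\neq0$, then $e^{h_i-h_n}=cf_n/f_i$ would have characteristic $o(T(r,e^{h_i-h_n}))$, which is absurd. If $c=0$, then $f_i\equiv0$, contrary to the reduction. So all $g_i\not\equiv0$, and once the $g_i$ are shown to satisfy hypothesis (3), the induction hypothesis gives the contradiction.

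\emph{Smallness of the $g_i$.} We have $T(r,(f_i/f_n)')\le 2T(r,f_i/f_n)+S(r,f_i/f_n)$, which is small. For the factor $h_i'-h_n'$, the lemma on logarithmic derivatives gives
$$T(r,h_i'-h_n')=m\Bigl(r,\frac{(e^{h_i-h_n})'}{e^{h_i-h_n}}\Bigr)=O\bigl(\log T(r,e^{h_i-h_n})+\log r\bigr)$$
outside an exceptional set.

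This last estimate is the step I expect to be the main obstacle. The bound is in terms of $e^{h_i-h_n}$, but hypothesis (3) needs it to be $o(T(r,e^{h_m-h_k}))$ for \emph{every} pair $m<k$, and different exponential differences need not have comparable growth. The $\log r$ part is harmless because every $e^{h_m-h_k}$ is transcendental. For the $\log T$ part, I would first try to show that $\log T(r,e^{h_i-h_n})=o(T(r,e^{h_m-h_k}))$ using $e^{h_i-h_n}=e^{h_i-h_m}e^{h_m-h_k}e^{h_k-h_n}$.

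If that fails in general, I have two fallbacks. One is to replace the differentiation trick by the Nevanlinna--Cartan second main theorem for $n-1$ linearly independent entire curves, which is the route taken in \cite{cc}. The other is to note that in the application to Theorem~4 one has $h_i=w_iz^q$ with distinct $w_i$. Then every $T(r,e^{h_m-h_k})$ is of exact order $r^q$, while $T(r,h_i'-h_n')=O(\log r)$, so hypothesis (3) is preserved trivially and the induction closes.
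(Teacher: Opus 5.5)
\textbf{Genuine gap: the differentiation step does not preserve hypothesis (3).}

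Your reading of hypothesis (2) as ``$h_i-h_k$ is not constant'' is the intended one. The reduction to nonvanishing $f_i$, the case $n=2$, and the proof that no $g_i$ vanishes are all fine. The trouble is exactly at the step you flagged, and your first repair does not work. The induction hypothesis needs $T(r,g_i)=o\bigl(T(r,e^{h_m-h_k})\bigr)$ for every remaining pair $m<k\le n-1$. But the factor $h_i'-h_n'$ is controlled only by $\log T(r,e^{h_i-h_n})$. In the factorization $e^{h_i-h_n}=e^{h_i-h_m}e^{h_m-h_k}e^{h_k-h_n}$, nothing prevents the outer factors from being much larger than the middle one.

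Concretely, take $h_1=z$, $h_2=0$, $h_3=e^z$ with nonzero constant $f_i$, so hypotheses (2) and (3) hold. Dividing by $f_3e^{h_3}$ gives $g_1=\frac{f_1}{f_3}(1-e^z)$. Hence $T(r,g_1)=\frac{r}{\pi}+O(1)$, which is not $o\bigl(T(r,e^{h_1-h_2})\bigr)=o(r)$; likewise $\log T(r,e^{h_1-h_3})=r+O(\log r)$. Dividing by a different term happens to work in this example, but your argument gives no rule for the choice. The relative sizes of the various $T(r,e^{h_m-h_k})$ can interchange as $r\to\infty$, so there is no evident single choice that works for all $r$ outside a set of finite measure.

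\textbf{What your argument does prove.} As written, it proves the lemma only under the extra assumption that each $T(r,h_i'-h_k')$ is $o\bigl(T(r,e^{h_m-h_k})\bigr)$ for all pairs. This holds for polynomial exponents such as $w_iz^q$, where the quantity is $O(\log r)$. That is your second fallback, and it covers the polynomial exponents occurring in the paper's applications, but it is not the stated lemma.

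\textbf{What the general statement needs.} You need the route you only name, which is the standard proof in the cited source:
\begin{enumerate}
\item Put $F_j=-\frac{f_j}{f_n}e^{h_j-h_n}$, so that $\sum_{j=1}^{n-1}F_j\equiv1$.
\item If the $F_j$ are linearly dependent over $\mathbb{C}$, a relation $\sum c_jF_j\equiv0$ is a shorter identity of the same type with coefficients $c_jf_j$, and induction applies.
\item If they are independent, Nevanlinna's second main theorem for $\sum F_j\equiv1$ (via the Wronskian) bounds each $T(r,F_j)$ by two things. The first is the counting functions of zeros and poles of the $F_k$; these come only from the $f_k$ and are therefore small. The second is an error $o(T(r))$ with $T(r)=\max_kT(r,F_k)$.
\end{enumerate}
Because the error is measured against the maximum, the growth mismatch that breaks your differentiation step never arises. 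Using $T(r,F_j)\ge T(r,e^{h_j-h_n})-T(r,f_j/f_n)+O(1)$, one gets $T(r)=o(T(r))$ outside the exceptional set, a contradiction.
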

	We now state a lemma that gives an estimate for the proximity function of the logarithmic derivative of a meromorphic function $f(z)$. 
	
	\begin{lemma}\label{imple2}\cite{ilpo}
		Suppose $f(z)$ is a transcendental meromorphic function and $k\geq1$ is an integer. Then 
		\begin{equation*}
			m\left(r,\frac{f^{(k)}}{f}\right)= S(r,f).
		\end{equation*}
		If $f$ is of finite order growth, then
		$$m\left(r,\frac{f^{(k)}}{f}\right)=O(\log r).$$
	\end{lemma}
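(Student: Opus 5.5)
We would follow the classical route through the Poisson--Jensen formula, first treating $k=1$ and then passing to higher derivatives by a telescoping product. For $k=1$, fix $r<R$ and differentiate the Poisson--Jensen representation of $\log|f(z)|$ on $|\zeta|=R$. This gives, for $|z|=r$, the standard formula
\begin{equation*}
\frac{f'(z)}{f(z)}=\frac{1}{2\pi}\int_0^{2\pi}\log|f(Re^{i\theta})|\frac{2Re^{i\theta}}{(Re^{i\theta}-z)^2}\,d\theta+\sum_{|a_\mu|<R}\Bigl(\frac{1}{z-a_\mu}+\frac{\bar a_\mu}{R^2-\bar a_\mu z}\Bigr)-\sum_{|b_\nu|<R}\Bigl(\frac{1}{z-b_\nu}+\frac{\bar b_\nu}{R^2-\bar b_\nu z}\Bigr),
\end{equation*}
where the $a_\mu$ and $b_\nu$ are the zeros and poles of $f$.

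The integral term is bounded by $\frac{4R}{(R-r)^2}\bigl(m(r,f)+m(r,1/f)\bigr)\le \frac{4R}{(R-r)^2}\bigl(2T(R,f)+O(1)\bigr)$, using the First Main Theorem. For the sums, we take $\log^+$ and average over $|z|=r$. We then use the elementary inequalities $\log^+\sum x_j\le\sum\log^+x_j+\log N$ and $\frac{1}{2\pi}\int_0^{2\pi}\log^+\frac{1}{|re^{i\varphi}-a|}\,d\varphi\le\log^+\frac1r+O(1)$. Together with the fact that the number of terms is $n(R,f)+n(R,1/f)\le\frac{C}{R'-R}T(R',f)$ for $R'>R$, this yields the Gol'dberg--Grinshtein type estimate
\begin{equation*}
m\Bigl(r,\frac{f'}{f}\Bigr)\le C\Bigl(\log^+T(\rho,f)+\log^+\rho+\log^+\frac{1}{\rho-r}+1\Bigr),\qquad r<\rho .
\end{equation*}

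The step we expect to be the main obstacle is converting this into $S(r,f)$. The bound involves $T(\rho,f)$ at a larger radius, and this is exactly where the exceptional set enters. We would choose $\rho=r+1/T(r,f)$ and invoke the Borel growth lemma: $T\bigl(r+1/T(r,f),f\bigr)\le 2T(r,f)$ outside a set of finite linear measure. This gives $m(r,f'/f)=O(\log T(r,f)+\log r)$ off that set. Since $f$ is transcendental, $\log r=o(T(r,f))$, and hence $m(r,f'/f)=S(r,f)$.

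For $k\ge2$ we write
\begin{equation*}
\frac{f^{(k)}}{f}=\prod_{j=0}^{k-1}\frac{f^{(j+1)}}{f^{(j)}},
\end{equation*}
so that $m(r,f^{(k)}/f)\le\sum_j m(r,f^{(j+1)}/f^{(j)})$. Each factor is $S(r,f^{(j)})$ by the case $k=1$ applied to $f^{(j)}$, which is again transcendental. We then use $T(r,f^{(j)})\le (j+1)T(r,f)+S(r,f)$, which follows inductively from $T(r,f')\le m(r,f)+m(r,f'/f)+2N(r,f)\le 2T(r,f)+S(r,f)$. Hence $S(r,f^{(j)})=S(r,f)$, and the claim follows for every $k$.

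For finite order $\rho(f)<\infty$ we take $\rho=2r$ in the estimate instead. Then $\log^+T(2r,f)=O(\log r)$ and $\log^+\frac{1}{\rho-r}\le 0$ for $r\ge1$, so $m(r,f'/f)=O(\log r)$ holds for all $r$ with no exceptional set. The derivatives $f^{(j)}$ also have finite order, so the telescoping argument gives $m(r,f^{(k)}/f)=O(\log r)$.
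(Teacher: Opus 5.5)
The paper gives no proof of this lemma; it quotes it from Laine's monograph, so your outline has to stand on its own. Its overall structure is the classical one, and the parts after the key estimate are fine: Borel's growth lemma with $\rho=r+1/T(r,f)$, the telescoping product for $k\ge2$ with $T(r,f^{(j)})=O(T(r,f))$, and the choice $\rho=2r$ in the finite-order case. The gap is in the step you pass over as routine. This is the average of $\log^+$ of $\sum_{|c_j|<R}(z-c_j)^{-1}$, where $c_j$ runs through the $N=n(R,f)+n(R,1/f)$ zeros and poles. The two inequalities you quote are each correct, but together they only give
\begin{equation*}
\frac{1}{2\pi}\int_0^{2\pi}\log^+\sum_{j=1}^{N}\frac{1}{|re^{i\varphi}-c_j|}\,d\varphi\le\log N+N\Bigl(\log^+\frac1r+O(1)\Bigr),
\end{equation*}
which is linear in $N$, not logarithmic: each of the $N$ averaged terms costs $O(1)$. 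In general $N$ is of the size of $T$; for $f=\sin z$ one has $n(r,1/f)\sim T(r,f)\sim 2r/\pi$. So this only bounds $m(r,f'/f)$ by something of the order of $n(R,f)+n(R,1/f)$, which is neither $S(r,f)$ nor $O(\log r)$. The Gol'dberg--Grinshtein type estimate you write down therefore does not follow from what you listed. Getting $\log N$ in place of $N$ is precisely the nontrivial content of the lemma.

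The missing idea is to move the logarithm outside the average by concavity, after weakening the singularity below the critical power. Fix $0<\alpha<1$. Use $\log^+x\le\frac1\alpha\log(1+x^\alpha)$, Jensen's inequality for the concave function $t\mapsto\log(1+t)$, and $(\sum_j x_j)^\alpha\le\sum_j x_j^\alpha$. This gives
\begin{equation*}
\frac{1}{2\pi}\int_0^{2\pi}\log^+\sum_{j=1}^{N}\frac{1}{|re^{i\varphi}-c_j|}\,d\varphi\le\frac1\alpha\log\Bigl(1+\sum_{j=1}^{N}\frac{1}{2\pi}\int_0^{2\pi}\frac{d\varphi}{|re^{i\varphi}-c_j|^{\alpha}}\Bigr)\le\frac1\alpha\log\bigl(1+C_\alpha Nr^{-\alpha}\bigr).
\end{equation*}
The last step uses $|re^{i\varphi}-c|\ge r|\sin(\varphi-\arg c)|$ and the integrability of $|\sin\theta|^{-\alpha}$. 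This is exactly where $\alpha<1$ is needed, since for $\alpha=1$ the integral diverges when $|c|=r$. With this in place of your two inequalities, the bound $m(r,f'/f)\le C\bigl(\log^+T(\rho,f)+\log^+\rho+\log^+\frac{1}{\rho-r}+1\bigr)$ holds, and the rest of your argument goes through.

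Two small corrections. In the integral term the proximity functions are taken at radius $R$, not $r$. Also, $n(R,f)+n(R,1/f)\le\frac{2R'}{R'-R}\bigl(T(R',f)+O(1)\bigr)$ carries a factor $R'$, which is harmless after taking $\log^+$.
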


\begin{lemma}\label{imple0}\cite{cc}
	Let $f(z)$ be a nonconstant meromorphic function in $\mathbb{C}$. If $0$, and $\infty$ are Picard exceptional values of $f(z)$, then $f(z)=e^{h(z)}$, where $h(z)$ is a nonconstant entire function. Especially, let $\lambda$, and $\mu$ be the order and the lower order of $f(z)$, respectively, $\mu< \infty$. Then, $\mu$ is a positive integer, $h(z)$ is a polynomial with degree $\mu$, and $\lambda=\mu$. 
\end{lemma}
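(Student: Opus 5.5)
The approach is to first use the Picard exceptional values to write $f=e^{h}$ with $h$ entire, and then control the growth of $h$ through $T(r,f)$ so that finite lower order forces $h$ to be a polynomial.

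\emph{Step 1: $f=e^{h}$.} Since $\infty$ is a Picard exceptional value and $f$ is meromorphic, $f$ has no poles, so it is entire. Since $0$ is also omitted, $f$ is zero-free. On the simply connected plane a zero-free entire function has an entire logarithm, so $f=e^{h}$ with $h$ entire. Because $f$ is nonconstant, $h$ is nonconstant.

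\emph{Step 2: bound $h$ by $T(r,f)$.} Put $A(r)=\max_{|z|=r}\operatorname{Re}h(z)$, so that $\log M(r,f)=A(r)$. For entire functions the standard inequality
$$\log M(r,f)\le \frac{R+r}{R-r}\,T(R,f)\qquad (0<r<R)$$
with $R=2r$ gives $A(r)\le 3T(2r,f)$. The Borel--Carath\'eodory inequality with $R=2r$ gives
$$M(r,h)\le \frac{2r}{R-r}A(R)+\frac{R+r}{R-r}|h(0)| = 2A(2r)+3|h(0)|.$$
Combining the two,
$$M(r,h)\le 6\,T(4r,f)+O(1).$$

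\emph{Step 3: use the lower order.} Since $\mu<\infty$, for any $\varepsilon>0$ there is a sequence $r_k\to\infty$ with $T(r_k,f)\le r_k^{\mu+\varepsilon}$. Setting $s_k=r_k/4$, Step 2 gives $M(s_k,h)\le C s_k^{\mu+\varepsilon}$. Write $h(z)=\sum a_jz^j$. Cauchy's estimates give $|a_j|\le M(s_k,h)s_k^{-j}\to 0$ for every $j>\mu+\varepsilon$. Hence $h$ is a polynomial, of degree $p\le \mu$ (taking $\varepsilon$ small).

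\emph{Step 4: identify the order.} Write $h(z)=a_pz^p+\cdots$ with $a_p\neq 0$ and $p\ge 1$, since $h$ is nonconstant. Then
$$T(r,e^{h})=m(r,e^{h})=\frac{1}{2\pi}\int_0^{2\pi}\max\{\operatorname{Re}h(re^{i\theta}),0\}\,d\theta=\frac{|a_p|}{\pi}r^{p}\bigl(1+o(1)\bigr).$$
Therefore the order and the lower order of $f$ are both equal to $p$. Thus $\lambda=\mu=p$ is a positive integer and $\deg h=\mu$.

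The only delicate point is that the lower order controls $T(r,f)$ only along a sequence $r_k$, not for all large $r$. Cauchy's estimates need a bound on just one circle for each radius in a sequence tending to infinity, so this step goes through, and the rest is routine.
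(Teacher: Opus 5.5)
Your proof is correct. The ingredients are: the entire logarithm; the bound $M(r,h)\le 6T(4r,f)+O(1)$, obtained from $\log M(r,f)\le 3T(2r,f)$ and Borel--Carath\'eodory; Cauchy's estimates along a sequence $r_k$ realizing the lower order; and the asymptotic $T(r,e^{h})=\frac{|a_p|}{\pi}r^{p}(1+o(1))$. Together these establish every claim, and you correctly avoid Hadamard factorization, which would need $\lambda<\infty$ rather than only $\mu<\infty$. The paper itself gives no proof of this lemma (it is quoted from \cite{cc}), and your argument is the standard one for that cited result.
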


	Next lemma plays an important part in the study of complex differential-difference
	equations.
	\begin{lemma}\label{imple 4}\cite{yl}
		Let $f(z)$ be  a transcendental meromorphic function, and $P(z,f)$, $Q(z,f)$ be two differential-difference polynomials of $f(z)$. If 
		\begin{equation*}
			f^n(z) P(z,f)=Q(z,f)
		\end{equation*}
		holds, and if the total degree of $Q(z,f)$ in  $f(z)$ and its derivatives and their shifts is atmost $n$, then
		\begin{equation*} 
			m(r,P(z.f))=S(r,f),
		\end{equation*}
		for all $r$ outside of a possible exceptional set of finite logarithmic measure.
	\end{lemma}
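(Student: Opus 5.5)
The plan is to prove Lemma \ref{imple 4} by the classical Clunie method: estimate $\log^+|P(z,f)|$ pointwise on the circle $|z|=r$. I split the circle into the set $E_1$ where $|f(z)|\le 1$ and the set $E_2$ where $|f(z)|>1$, and write
$$m(r,P)=\frac{1}{2\pi}\int_{E_1}\log^+|P(re^{i\theta},f)|\,d\theta+\frac{1}{2\pi}\int_{E_2}\log^+|P(re^{i\theta},f)|\,d\theta.$$

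\emph{The set $E_1$.} Write each monomial of $P$ as $a_\lambda f^{d_\lambda}\prod_j \bigl(f^{(j)}(z+c_j)/f(z)\bigr)^{\lambda_j}$, where the shifts $c_j$ may be zero. Since $|f|\le1$ there, each term is bounded by
$$|a_\lambda|\prod_j\left|\frac{f^{(j)}(z+c_j)}{f(z)}\right|^{\lambda_j}.$$
The proximity function of each quotient is $S(r,f)$. For pure derivatives this follows from Lemma \ref{imple2}. For shifts, I write
$$\frac{f^{(j)}(z+c)}{f(z)}=\frac{f^{(j)}(z+c)}{f(z+c)}\cdot\frac{f(z+c)}{f(z)}$$
and invoke the logarithmic derivative lemma together with the difference analogue of the logarithmic derivative lemma. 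The difference analogue holds outside a set of finite logarithmic measure (this is where that exceptional set comes from), and in the cited setting I am assuming $f$ has hyper-order less than one so that it applies. The coefficients are small functions. Adding these pieces gives $\int_{E_1}\log^+|P|\,d\theta=S(r,f)$.

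\emph{The set $E_2$.} Here I use the equation: $|P|=|Q|/|f|^n$. Each monomial of $Q$ has total degree $d_\mu\le n$, so it can be written as $b_\mu f^{d_\mu}\prod(\text{quotients as above})$. Hence
$$\frac{|Q|}{|f|^n}\le\sum_\mu |b_\mu|\,|f|^{d_\mu-n}\prod\left|\frac{f^{(j)}(z+c_j)}{f(z)}\right|^{\mu_j}\le\sum_\mu |b_\mu|\prod\left|\frac{f^{(j)}(z+c_j)}{f(z)}\right|^{\mu_j},$$
because $|f|>1$ and $d_\mu-n\le0$. Integrating $\log^+$ of this bound over $E_2$ again gives $S(r,f)$, by the same quotient estimates. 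I use $\log^+(\sum x_i)\le\sum\log^+x_i+\log(\#\text{terms})$ and $\log^+(xy)\le\log^+x+\log^+y$ throughout.

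The only delicate point is the shift quotients $f(z+c)/f(z)$. These need the difference lemma, with its growth hypothesis and its logarithmic-measure exceptional set. Everything else is routine bookkeeping: combining the two pieces gives $m(r,P)=S(r,f)$ outside a set of finite logarithmic measure.
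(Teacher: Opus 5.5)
The paper gives no proof of this lemma; it is only quoted from [yl]. Your argument is the standard Clunie-type proof of that result and is correct: you split $|z|=r$ into $\{|f|\le 1\}$, where $|f|^{d_\lambda}\le 1$, and $\{|f|>1\}$, where $P=Q/f^n$ with $\deg Q\le n$, and you control the quotients $f^{(j)}(z+c)/f(z)$ by the logarithmic derivative lemma and its difference analogue.

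Your added growth hypothesis is necessary, not just convenient. As printed, with shifts allowed and no growth restriction, the statement is false. The function $f(z)=\exp(2^z)$ has hyper-order $1$ and satisfies $f(z)\cdot f(z)=f(z+1)$, i.e.\ $f^nP=Q$ with $n=1$, $P=f$ and $\deg Q=1$. Yet $m(r,f)=T(r,f)\neq S(r,f)$. The same hypothesis also gives $T(r,f(z+c))=T(r,f)+S(r,f)$. You use this tacitly when you turn $m\bigl(r,f^{(j)}(z+c)/f(z+c)\bigr)=S(r,f(z+c))$ into $S(r,f)$. In the paper the lemma is only applied to pure differential polynomials, and there your argument needs no growth assumption at all.
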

	
	Next lemma estimates the characteristic function of an exponential polynomial $f$. This lemma can be seen in \cite{whl}.
	\begin{lemma}\label{imple5}\cite{wen}
		Suppose $f$ is an entire function given by
		$$f(z)=B_{0}(z)+B_{1}(z)e^{w_{1}z^{t}}+B_{2}(z)e^{w_{2}z^{t}}+...+B_{m}(z)e^{w_{m}z^{t}},$$
		where $B_{i}(z);0\leq i\leq m$ denote either exponential polynomial of degree $<t$ or polynomial in $z$, $w_{i};1\leq i\leq m$ denote the constants and $t$ denotes a natural number. Then
		$$T(r,f)=C(Co(W_{0}))\frac{r^{t}}{2\pi}+o(r^{t}),$$
		Here $C(Co(W_{0}))$ is the perimeter of the convex hull of the set $W_{0}=\{0,\overline{w}_{1},\overline{w}_{2},...,\overline{w}_{m}\}$.
	\end{lemma}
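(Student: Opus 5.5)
The plan is to estimate $m(r,f)$ directly by the asymptotic size of $\log|f(re^{i\theta})|$ in each direction, and then convert the resulting angular integral into the perimeter of a convex set using Cauchy's formula. Since $f$ is entire, $N(r,f)=0$, so $T(r,f)=m(r,f)=\frac{1}{2\pi}\int_0^{2\pi}\log^+|f(re^{i\theta})|\,d\theta$.

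Put $w_0=0$ and $B_0$ as coefficient, and drop the indices with $B_j\equiv 0$; for $B_0\equiv0$ this is only safe because $0\in W_0$ anyway, which I will use below. For $z=re^{i\theta}$ we have $|e^{w_j z^t}|=\exp\bigl(r^t\,\mathrm{Re}(w_j e^{it\theta})\bigr)$, and $\mathrm{Re}(w_je^{it\theta})=\mathrm{Re}(\overline{w}_j e^{-it\theta})$. Hence the dominant exponential rate in direction $\theta$ is
$$h(\theta)=\max_{0\le j\le m}\mathrm{Re}\bigl(\overline{w}_j e^{-it\theta}\bigr)=H\bigl(-t\theta\bigr),$$
where $H$ is the support function of the convex hull $Co(W_0)$. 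Because $0\in W_0$, we have $H\ge 0$.

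\textbf{Upper bound.} Since $B_j$ has order $<t$, $\log M(r,B_j)=o(r^t)$. Therefore $\log^+|f(re^{i\theta})|\le r^t h(\theta)+o(r^t)$ uniformly in $\theta$. Integrating gives
$$m(r,f)\le \frac{r^t}{2\pi}\int_0^{2\pi}H(-t\theta)\,d\theta+o(r^t)=\frac{r^t}{2\pi}\int_0^{2\pi}H(\varphi)\,d\varphi+o(r^t).$$
The last equality uses the substitution $\varphi=-t\theta$: this covers $t$ full periods, and the factor $t$ cancels against the Jacobian $1/t$. By Cauchy's formula, $\int_0^{2\pi}H=C(Co(W_0))$.

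\textbf{Lower bound.} Fix $\varepsilon>0$. There are only finitely many "Stokes directions" $\theta$ at which two distinct indices attain the maximum in $h$. Remove arcs of total measure $\varepsilon$ around them. On the remaining arcs a single index $j(\theta)$ dominates all others by at least $\delta r^t$, with $\delta=\delta(\varepsilon)>0$.

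Next I need a minimum-modulus estimate for the coefficients. For entire $B_j\not\equiv0$ of order $\sigma<t$, I would invoke a Cartan-type (or Boutroux–Cartan) lemma: $\log|B_j(z)|\ge -r^{\sigma'}$ with $\sigma<\sigma'<t$, for $|z|=r$ outside a set of $r$ of finite logarithmic measure. The same estimate bounds every $|B_k|$ from above by $e^{o(r^t)}$. Consequently, on the good arcs
$$\log|f(re^{i\theta})|=r^t h(\theta)+o(r^t),$$
since the non-dominant terms are smaller by a factor $e^{-\delta r^t/2}$. On the removed arcs I only use $\log^+\ge 0$.

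This gives $m(r,f)\ge \frac{r^t}{2\pi}\bigl(C(Co(W_0))-\varepsilon\max H\bigr)+o(r^t)$ for $r$ outside the exceptional set. Letting $\varepsilon\to0$ yields the lower bound off that set. Finally, monotonicity of $T$ together with the finite logarithmic measure of the exceptional set removes the restriction on $r$: any $r$ has some admissible $r'\in[r,r(1+o(1))]$, and $r'^t=r^t(1+o(1))$.

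The main obstacle is the lower bound. It requires both the uniform separation of the dominant term away from the Stokes directions and the minimum-modulus control of the coefficients $B_j$. The latter is where the hypothesis $\rho(B_j)<t$ is essential, and it is also what forces the exceptional set in $r$. I would first try the standard Cartan lemma applied to the canonical product of $B_j$; if the exceptional set proves awkward, I would fall back on Steinmetz's general theorem for exponential polynomials with small coefficients.
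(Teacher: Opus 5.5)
The paper gives no proof of this lemma; it only cites \cite{wen} (see also \cite{whl}), where the result rests on Steinmetz's asymptotics for exponential polynomials. Your argument is a correct, self-contained proof along those classical lines, and it is organized around the same three ingredients:
\begin{itemize}
\item the uniform upper bound $\log^+|f(re^{i\theta})|\le r^t h(\theta)+o(r^t)$;
\item Cauchy's formula $\int_0^{2\pi}H=C(Co(W_0))$, where a degenerate hull such as a segment has ``perimeter'' equal to twice its length, consistent with $T(r,e^z)=r/\pi$;
\item a lower bound from single-term dominance away from the finitely many Stokes directions, combined with the minimum-modulus estimate $\log|B_j(z)|\ge -r^{\sigma'}$ on circles $|z|=r\notin E$, with $E$ of finite logarithmic measure. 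This estimate is standard: apply Cartan's lemma to the Hadamard factorization of $B_j$.
\end{itemize}

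Your route buys extra generality. It works for arbitrary entire coefficients of order $<t$, not only polynomials or exponential polynomials of degree $<t$. That is exactly how the paper uses the lemma in the proof of Lemma 6, where the $B_j$ are arbitrary entire functions of order $<q$, which the cited statement does not cover.

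There are two small repairs to make.

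First, when you remove the exceptional set you must move \emph{down}, not up. Monotonicity gives $T(r,f)\ge T(r',f)$ only for $r'\le r$; an admissible $r'\ge r$ bounds $T(r,f)$ from above, which is useless here. So choose $r'\in[r/(1+\eta),r]\setminus E$. This is possible for all large $r$ because $E$ has finite logarithmic measure. It gives $T(r,f)\ge \bigl(C(Co(W_0))/(2\pi)-\eta\bigr)(1+\eta)^{-t}r^t$, and then let $\eta\to0$.

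Second, state explicitly the hypotheses the lemma tacitly needs: the $w_j$ are distinct and nonzero, and $B_j\not\equiv0$ for $1\le j\le m$.
\begin{itemize}
\item Your finiteness of Stokes directions uses the distinctness of the $w_j$.
\item ``Dropping the indices with $B_j\equiv 0$'' changes $W_0$, so it does not prove the statement as written. Indeed the statement is false if, say, $m=1$ and $B_1\equiv0$: then $T(r,f)=T(r,B_0)=o(r^t)$, while the formula gives $|w_1|r^t/\pi$.
\end{itemize}
The case $B_0\equiv0$ is harmless, as you note, because $\log^+$ already produces the $\max(0,\cdot)$ that puts $0$ into the hull.
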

		%
	
	
Using ideas from lemma  of \cite[lemma 2.1]{bo}, we prove the follwing lemma which plays a key role in proving our main theorem.
\begin{lemma}
	Let $m,n,q$ be positive integers, $n \geq m \geq 2$, $w_j (j=1,2,3,...,m)$ be distinct constants and $B_j$ are nonzero entire functions  of order less than $q$, then this  equation
\begin{equation}\label{3p}
	f^nf'=B_1(z)e^{w_1z^q}+B_2(z)e^{w_2z^q}+...+B_m(z)e^{w_mz^q}
\end{equation}
 has no meromorphic solutions with $\lambda(f)<\rho(f)$.
\end{lemma}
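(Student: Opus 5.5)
We argue by contradiction: suppose $f$ is a meromorphic solution of \eqref{3p} with $\lambda(f)<\rho(f)$. First, $f$ is entire. A pole of $f$ of order $p$ would be a pole of $f^nf'$ of order $(n+1)p+1$, but the right-hand side of \eqref{3p} is entire. Next, the right-hand side, call it $g$, has finite order at most $q$ by Lemma \ref{imple5}, since the $B_j$ have order $<q$. From $T(r,f^nf')\ge (n+1)T(r,f)-T(r,f'/f)+O(1)$, or more simply $(n+1)T(r,f)\le T(r,g)+S(r,f)$, we get $\rho(f)\le q<\infty$. Since $\lambda(f)<\rho(f)$, Hadamard factorization gives $f=\pi(z)e^{h(z)}$. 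Here $\pi$ is the canonical product of the zeros, so $\rho(\pi)=\lambda(f)$, and $h$ is a polynomial with $\deg h=\rho(f)\le q$. The hypothesis forces $h$ to be nonconstant.

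Substituting, we get $f^nf'=\pi^n(\pi'+h'\pi)e^{(n+1)h}=\sum_{j=1}^m B_je^{w_jz^q}$. The plan is to compare growth and then apply Borel's Lemma \ref{imple1}.

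\emph{Case $\deg h<q$.} The left side $F=\pi^n(\pi'+h'\pi)e^{(n+1)h}$ has order $<q$. Rewriting \eqref{3p} as $-F\cdot e^{0\cdot z^q}+\sum_j B_je^{w_jz^q}\equiv 0$, all coefficients have order $<q$. The exponents $0,w_1z^q,\dots,w_mz^q$ have pairwise differences of the form $cz^q$ with $c\neq0$, except possibly one $w_j=0$. Since the $w_j$ are distinct, at most one of them vanishes. If none vanishes, Borel's lemma gives $B_j\equiv0$, a contradiction. If say $w_1=0$, it gives $B_j\equiv0$ for $j\ge2$, again contradicting $m\ge2$.

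\emph{Case $\deg h=q$.} Write $(n+1)h=cz^q+h_1$ with $\deg h_1<q$ and $c\ne0$. Then
\[
\pi^n(\pi'+h'\pi)e^{h_1}e^{cz^q}-\sum_{j=1}^m B_je^{w_jz^q}\equiv0,
\]
and every coefficient has order $<q$. This holds because $\rho(\pi)=\lambda(f)<\rho(f)=q$, and multiplying by polynomials and by $e^{h_1}$ keeps the order below $q$. Group the terms by equal exponent. If $c\ne w_j$ for all $j$, Borel's lemma gives $\pi^n(\pi'+h'\pi)\equiv0$ and all $B_j\equiv0$, which is absurd. If $c=w_{j_0}$, Borel's lemma still forces $B_j\equiv0$ for every $j\ne j_0$. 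Since $m\ge2$, this contradicts the hypothesis that all $B_j$ are nonzero.

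In both cases we reach a contradiction, so no such solution exists.

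The main obstacle is checking the hypotheses of Lemma \ref{imple1} rigorously, namely $T(r,\text{coefficient})=o(T(r,e^{(a-b)z^q}))$ with $T(r,e^{(a-b)z^q})\sim |a-b|r^q/\pi$. This needs every coefficient to have order strictly less than $q$. The delicate point is the coefficient $\pi^n(\pi'+h'\pi)e^{h_1}$. There we use $\rho(\pi)=\lambda(f)<\rho(f)$ together with $\rho(f)\le q$, the latter coming from the Nevanlinna estimate via Lemma \ref{imple2}. If $\rho(f)<q$ we are in the first case, so $\rho(\pi)<q$ in either case.
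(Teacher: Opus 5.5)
Your proof is correct, and its core is the paper's. You show $f$ is entire of finite order, use Hadamard to write $f=\pi e^{h}$ with $\rho(\pi)=\lambda(f)<\rho(f)$, and apply Borel's lemma to $\pi^n(\pi'+h'\pi)e^{(n+1)h}-\sum_j B_je^{w_jz^q}\equiv0$. This kills all but at most one $B_j$, contradicting $m\ge2$. You differ from the paper in the supporting steps, and there your version is the more careful one.

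The paper gets finite order from Wittich's dominant-term result (Lemma \ref{imple7}), which is stated only for polynomial coefficients. You instead get $\rho(f)\le\rho(g)\le q$ directly from the logarithmic derivative lemma.

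The paper then asserts $\rho(f)=q$ from $T(r,f)=O(r^q)$, which gives only $\rho(f)\le q$, yet its main case uses $\deg P=q$. Your separate case $\deg h<q$ supplies the missing lower bound. There you apply Borel with the extra exponent $0$ and allow for one $w_j=0$. Your single factorization also absorbs the paper's separate zero-free case, since there $\pi$ is constant.

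Two small repairs are needed.

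\emph{The growth inequality.} The shortcut $(n+1)T(r,f)\le T(r,g)+S(r,f)$ does not follow from your first inequality. Indeed $T(r,f'/f)=m(r,f'/f)+\bar N(r,1/f)$, and $\bar N(r,1/f)$ need not be $S(r,f)$. Bounding $\bar N(r,1/f)\le T(r,f)+O(1)$ instead gives $nT(r,f)\le T(r,g)+S(r,f)$, which is all you need.

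\emph{The citation of Lemma \ref{imple5}.} That lemma assumes polynomial or exponential-polynomial coefficients, so it does not cover general entire $B_j$. However, $\rho(g)\le q$ follows at once from $T(r,g)\le\sum_j\bigl(T(r,B_j)+T(r,e^{w_jz^q})\bigr)+O(1)$.
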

Next, to understand the growth behavior of solutions of algebraic differential equations, we recall the following result concerning the role of dominant terms.
  \begin{lemma}\label{imple7}\cite{wittich}
  	If the algebraic differential equation
  	$$ P(z,f)=0,$$ where $P(z,f)$ is a differential polynomial in $f(z)$ with polynomial coefficients, has only one dominant term, then the equation has no transcendental entire solutions.
  \end{lemma}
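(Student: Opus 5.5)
We would prove the statement by Wiman--Valiron theory applied at maximum-modulus points; throughout we take ``dominant term'' in the usual sense of Wittich, made precise in the next paragraph. Suppose, to the contrary, that $f$ is a transcendental entire solution of $P(z,f)=0$. Write
\[
P(z,f)=\sum_{\lambda\in I} a_\lambda(z)\, f^{\lambda_0}(f')^{\lambda_1}\cdots (f^{(t)})^{\lambda_t},
\]
where the $a_\lambda$ are polynomials. Let $\nu(r)$ be the central index of $f$ and $M(r)=M(r,f)$ its maximum modulus. For each $r$ choose $z_r$ with $|z_r|=r$ and $|f(z_r)|=M(r)$. The Wiman--Valiron theorem gives, for $r$ outside a set $F\subset(1,\infty)$ of finite logarithmic measure and for $1\le j\le t$,
\[
\frac{f^{(j)}(z_r)}{f(z_r)}=\Bigl(\frac{\nu(r)}{z_r}\Bigr)^{j}(1+o(1)).
\]

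Substituting into each monomial, the $\lambda$-th term at $z_r$ equals
\[
c_\lambda\, z_r^{\deg a_\lambda-\Gamma_\lambda}\,\nu(r)^{\Gamma_\lambda}\, f(z_r)^{d_\lambda}(1+o(1)),
\qquad \Gamma_\lambda=\sum_{j} j\lambda_j ,
\]
where $c_\lambda\neq 0$ is the leading coefficient of $a_\lambda$ and $d_\lambda=\lambda_0+\cdots+\lambda_t$. So the size of the $\lambda$-th term is
\[
|c_\lambda|\, r^{\deg a_\lambda-\Gamma_\lambda}\,\nu(r)^{\Gamma_\lambda}\,M(r)^{d_\lambda}.
\]
Following Wittich, a term is \emph{dominant} if this quantity strictly exceeds, in order of magnitude, that of every other term. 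Concretely, the ordering is lexicographic: first by the degree $d_\lambda$, then by the weight, with ties broken by the degree of the coefficient.

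The key step is to show that this lexicographic order really reflects the asymptotic order of the sizes above as $r\to\infty$, $r\notin F$. We use two growth facts for transcendental entire $f$:
\begin{itemize}
\item $\log M(r)/\log r\to\infty$, so $M(r)$ beats every power of $r$;
\item $\nu(r)\le (\log M(r))^{1+\varepsilon}$ outside a set of finite logarithmic measure, so every fixed power $r^{b}\nu(r)^{a}$ is $o(M(r))$.
\end{itemize}
These show that a higher degree $d_\lambda$ always wins. Among terms of equal degree, $\nu(r)\to\infty$ and the powers of $r$ must be compared. This comparison is the main obstacle, since $\nu(r)$ may grow slowly, for instance like a power of $\log r$. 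We would handle it exactly through the definition of dominance: the hypothesis of a unique dominant term means that the dominant exponent pair $(\Gamma,\deg a-\Gamma)$ strictly exceeds every competitor in the sense that the quotient $r^{b'}\nu^{a'}/(r^{b}\nu^{a})\to0$. In Wittich's convention this holds once $\nu(r)\to\infty$ and the degree/weight ordering is respected.

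Finally, divide the equation $P(z_r,f(z_r))=0$ by the dominant term. Every other quotient tends to $0$ along $r\to\infty$, $r\notin F\cup F'$, where $F'$ is the exceptional set from the bound on $\nu(r)$; this is possible since $F\cup F'$ has finite logarithmic measure, so such $r$ are unbounded. We then obtain $1+o(1)=0$, a contradiction. Hence no transcendental entire solution exists.
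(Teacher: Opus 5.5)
The paper gives no proof of this lemma; it is only cited from Wittich. The natural benchmark is therefore the standard Wiman--Valiron argument, and you reproduce its architecture correctly. The asymptotics at maximum-modulus points are fine. So is the argument that a term of strictly larger degree wins, via $\log M(r)/\log r\to\infty$ and $\nu(r)\le(\log M(r))^{1+\varepsilon}$. So is the final division giving $1+o(1)=0$.

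\textbf{The gap is the comparison of terms of equal degree.} You flag this yourself as the main obstacle, and then settle it by assertion. With the lexicographic order you state (degree, then weight, then coefficient degree), the claim that the quotient tends to $0$ is false, and so is the lemma. In $f'-f=0$ the term $f'$ is the unique lexicographic maximum (weight $1$ against $0$), yet $e^z$ is a transcendental entire solution. Indeed, at maximum-modulus points the quotient of the two terms is $\nu(r)/r\,(1+o(1))\to 1$. Similarly, $f'=z^2f$, solved by $e^{z^3/3}$, shows that weight cannot be allowed to override coefficient degree. So your sentence ``in Wittich's convention this holds once $\nu(r)\to\infty$ and the degree/weight ordering is respected'' is wrong. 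Falling back on defining dominance by ``the quotient tends to $0$'' does not rescue the argument either. That condition involves $\nu(r)$, i.e.\ the unknown solution, so the lemma would become circular rather than a criterion on the equation.

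\textbf{What is needed is a condition on the exponents alone.} Write $a=\Gamma_\lambda$ and $b=\deg a_\lambda-\Gamma_\lambda$. The condition must force the quotient $C\,r^{b'-b}\nu(r)^{a'-a}$ to tend to $0$ for every possible growth of $\nu(r)\to\infty$, slow or fast. This is guaranteed when $a'\le a$ and $b'\le b$ with at least one strict inequality; since $\nu(r)$ may grow arbitrarily slowly or quickly, essentially only then. In words: among the terms of maximal degree, the dominant one must simultaneously have maximal weight $\Gamma_\lambda$ and maximal $\deg a_\lambda-\Gamma_\lambda$, and strictly exceed each competitor in at least one of the two. This is a componentwise order, not a lexicographic one. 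With that definition the quotient is at most $C\,r^{-(b-b')}\nu(r)^{-(a-a')}\to 0$, because $r\to\infty$ and $\nu(r)\to\infty$, and the rest of your proof goes through unchanged. Note that $f'-f$ then correctly has no dominant term: the pairs $(\Gamma,\deg a-\Gamma)$ are $(1,-1)$ for $f'$ and $(0,0)$ for $f$, and neither dominates the other. The same holds for $f'=z^2f$.
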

  Since linear combinations of differential polynomials and their derivatives arise naturally in our setting, we state the following lemma.
  \begin{lemma}\label{imple10}\cite{jf}
  	Let $k$ be a positive integer, and let $\alpha$, $\beta$ be nonzero constants. If $P(z,f,f',...,f^{(t)})$ be a differential polynomial in $f(z)$ of degree $d$ with small functions of $f(z)$ as its coefficients, then $\alpha P(z,f,f',...,f^(t))+\beta P^{(k)}(z,f,f',,,f^{(t)})$ is a differential polynomial in $f(z)$ of degree atmost $d$.
  	\end{lemma}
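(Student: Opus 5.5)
We must show that for nonzero constants $\alpha,\beta$ and a positive integer $k$, the expression $\alpha P+\beta P^{(k)}$ is again a differential polynomial in $f$ with small coefficients, of degree at most $d$. The plan is to expand $P$ term by term and differentiate each monomial. By linearity of differentiation it suffices to handle one monomial $M_\lambda=a_\lambda(z)f^{\lambda_0}(f')^{\lambda_1}\cdots(f^{(t)})^{\lambda_t}$ with $d_\lambda\le d$.

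First compute $M_\lambda'$ with the Leibniz rule:
\[
M_\lambda'=a_\lambda' f^{\lambda_0}\cdots(f^{(t)})^{\lambda_t}+a_\lambda\sum_{j=0}^{t}\lambda_j (f^{(j)})^{\lambda_j-1}f^{(j+1)}\prod_{i\ne j}(f^{(i)})^{\lambda_i}.
\]
Each summand is a monomial in $f,f',\dots,f^{(t+1)}$. The first summand keeps weight $d_\lambda$. In the others, one factor $f^{(j)}$ is replaced by $f^{(j+1)}$, so the weight stays exactly $d_\lambda$. Thus $M_\lambda'$ is a differential polynomial of degree at most $d_\lambda$. Its coefficients are $a_\lambda'$ and integer multiples of $a_\lambda$.

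Next we check that these coefficients are small functions of $f$. For $a_\lambda$ this holds by hypothesis. For $a_\lambda'$, Lemma \ref{imple2} gives
\[
T(r,a_\lambda')\le T(r,a_\lambda)+\overline N(r,a_\lambda)+m\!\left(r,\frac{a_\lambda'}{a_\lambda}\right)\le 2T(r,a_\lambda)+S(r,a_\lambda)=S(r,f).
\]
This holds when $a_\lambda$ is transcendental; if $a_\lambda$ is rational, then $a_\lambda'$ is trivially small.

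Then we induct on $k$: applying the same step $k$ times shows that $P^{(k)}$ is a differential polynomial in $f,\dots,f^{(t+k)}$ whose coefficients are derivatives $a_\lambda^{(s)}$ ($s\le k$) times integers, all small. Its degree is at most $d$, since no step raises the weight.

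Finally, $\alpha P+\beta P^{(k)}$ is a linear combination of two such polynomials with constant multipliers. Its monomials have weight at most $d$, and its coefficients, being finite sums of constant multiples of small functions, are small. Cancellation can only remove monomials, so the degree is at most $d$, possibly less. The only point needing care is the smallness of derivatives of the coefficients, handled by Lemma \ref{imple2}; this holds outside an exceptional set of finite measure, which is consistent with the definition of $S(r,f)$.
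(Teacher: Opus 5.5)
Your argument is correct. The paper gives no proof of this lemma of its own; it is quoted from Chen and Feng \cite{jf}, so there is no competing route to compare. What you wrote is the standard justification. The Leibniz rule on each monomial replaces one factor $f^{(j)}$ by $f^{(j+1)}$ and so preserves the total degree $d_\lambda$. The estimate $T(r,a')\le T(r,a)+\overline{N}(r,a)+m(r,a'/a)$, combined with the logarithmic derivative lemma, keeps the coefficients small. Iterating $k$ times and taking the linear combination finishes it, and finitely many exceptional sets of finite linear measure still have finite total measure.

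One remark is worth adding. Your proof never uses that $\alpha,\beta$ are constants or nonzero. Products of small functions are small, since $T(r,gh)\le T(r,g)+T(r,h)$. So the same argument shows that $\alpha P+\beta P^{(k)}$ has degree at most $d$ whenever $\alpha,\beta$ are small functions of $f$. That is the form in which the paper actually applies the lemma, for example to $(B_1'+B_1w_1qz^{q-1})P-B_1P'$ in Case (A)(b), where the multipliers are entire functions of order less than $q$ rather than constants.
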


	\section{\textbf{Proof of  lemma and main theorem}}
	\begin{proof}[\textbf{\underline{Proof of Lemma 6}}] 
 Let on the contrary $f(z)$ be a meromorphic solution of equation
	\eqref{3p}, 
	We claim that $f(z)$ is an entire function, If not, then
	$f(z)$  has atleast one pole say $z_0$. From equation \eqref{3p}, it follows that 
	$B_1(z)e^{w_1z^q}+B_2(z)e^{w_2z^q}+...+B_m(z)e^{w_mz^q}$  must have  a pole $z_0$.This is impossible, since the function $B_1(z)e^{w_1z^q}+B_2(z)e^{w_2z^q}+...+B_m(z)e^{w_mz^q}$ is entire  and hence admits $\infty$ as a picard exceptional value. Therefore, f must be  an entire function.
	Next, we show  that $f(z)$ is a transcendental entire function. Suppose on the contrary that $f(z)$ is a polynomial , then $f^nf'$ is clearly small function of  $B_1(z)e^{w_1z^q}+B_2(z)e^{w_2z^q}+...+B_m(z)e^{w_mz^q}$. Applying Borel's lemma \ref{imple1} to  equqation  \eqref{3p}, we obtain $B_j=0 (j=1,2,...,m)$, which is absurd as $B_j $'s are nonzero constants. Hence, $f(z)$ cannot be a polynomial. Therefore, $f(z)$ is transcendental entire function.\\
	If $\rho(f)$ is infinite, then it implies that	$B_1(z)e^{w_1z^q}+B_2(z)e^{w_2z^q}+...+B_m(z)e^{w_mz^q}$ is a small function of $f(z)$. As $f(z)$ is  an admissible transcendental entire solution of \eqref{3p}, the differential equation has only one dominant term, namely $f^nf'$. This contradicts  lemma \ref{imple7}. Hence, $f(z)$ must be of finite order. 
With the help of Lemma \ref{imple5}, we have 
	\begin{align*}
	T(r,f^nf') = (n+1)T(r,f)=&T(r,B_1(z)e^{w_1z^q}+B_2(z)e^{w_2z^q}+...+B_m(z)e^{w_mz^q})\\& =O(r^q).
	\end{align*}
Consequently,  $\rho(f)=q$. We now distinguish between the following two cases.\\
\textbf{1.}	Transcendental function has no zeroes. From Lemma \ref{imple0}, we have $f(z)=e^{g(z)}$, where $g(z)$ is a polynomial of degree $\rho(f)$. Equation \eqref{3p} can be rewritten as  
	\begin{equation*}
		g'(z)e^{(n+1)g(z)}=B_1(z)e^{w_1z^q}+B_2(z)e^{w_2z^q}+...+B_m(z)e^{w_mz^q}.
		\end{equation*}
$(a)$ If there does not  exists any $l \in \{1,2,...,m\}$ such that $(n+1)g^{(q)}(z)\neq w_l$, then 
\begin{equation*}
	B_1(z)e^{w_1z^q}+B_2(z)e^{w_2z^q}+...+B_m(z)e^{w_mz^q}- g'(z)e^{(n+1)g(z)}=0.
\end{equation*}  
By Borel's lemma \ref{imple1}, we have $B_i=0$ for all $i \in \{1,2,...,m\}$. This contradicts with the assumption $B_i$ is a nonzero entire function.\\
$(b)$ If there exist some  $l \in \{1,2,...,m\}$ such that $(n+1)g^{(q)}(z)= w_l$,implies $g(z)$ is a polynomial of degree $q$, i.e $g(z)=a_qz^q+...+a_0$, where $a_q(\neq0), a_{q-1},...,a_0$  are constants, then
\begin{equation*}
	B_1(z)e^{w_1z^q}+...+(B_l(z)-\gamma e^{(n+1)a_0})e^{w_lz^q} +...+ B_m(z)e^{w_mz^q}=0.
\end{equation*}
Again by Borel Lemma \ref{imple1}, we obtain $B_i=0$ for all $i \neq l$. This contradicts with the assumption $B_i$ is a nonzero entire function.\\
\textbf{2.}	Transcendental function has at least one zero, say $z_0$ be a zero of $f(z)$ with multiplicity $t \geq 1 $.Then, we get $f(z)=(z-z_0)^t \psi (z)$, where $\psi (z)$ is an analytic function in the neighbourhood of $z_0$ satisfying $\psi (z_0)\neq 0$.\\
As order is finite  and also $\lambda(f)<\rho(f)$, with the help of Hadamard's factorization $f(z)$ can be written as $\beta(z)e^{P(z)}$, where $P(z)$ is a polyomial  of degree $q$ say $P(z)=a_qz^q+q_{q-1}z^{q-1}+...+a_0$,  and $\beta(z)$ is an entire function with order less than $q$, as $\rho(\beta)=\lambda(f)<\rho(f)(a_q\neq0)=q$.  \\
Now substituting $f(z)$ in equation \eqref{3p}, we have 
\begin{align*}
	&(\beta(z)e^{P(z)})^n(\beta(z)e^{P(z)})'=B_1(z)e^{w_1z^q}+B_2(z)e^{w_2z^q}+...+B_m(z)e^{w_mz^q},\\&
	e^{nP}\beta^n(\beta'e^{P}+e^{P}P'\beta)=B_1(z)e^{w_1z^q}+B_2(z)e^{w_2z^q}+...+B_m(z)e^{w_mz^q},\\& \beta^n(\beta'+P'\beta)e^{(n+1)P}-B_1(z)e^{w_1z^q}-B_2(z)e^{w_2z^q}-...-B_m(z)e^{w_mz^q}=0.
\end{align*}
Set\begin{equation*}
	A(z)=\beta^n(\beta'+P'\beta).
\end{equation*}
 $\rho(\beta)<q$ implies $\rho(A)<q$. Above equation can be written as
\begin{equation}\label{2q}
	A(z)e^{(n+1)P(z)}-B_1(z)e^{w_1z^q}-B_2(z)e^{w_2z^q}-...-B_m(z)e^{w_mz^q}=0
\end{equation} we can apply Borel's Lemma \ref{imple1} as rest assumptions are assured. We have two cases:\\
\textbf{(a)} If for every $j \in \{{1,2,...,m}\}$,
\begin{equation*}
(n+1)P(z)-w_jz^q
\end{equation*}
 is nonconstant, then  all exponents are distinct and Borel's lemma yields 
 $$ A(z)\equiv B_1\equiv...B_m\equiv 0,$$ which is a contradiction to the assumptions.\\
 \textbf{(b)} If for some $j \in \{{1,2,...,m}\}$,
 \begin{equation*}
 	(n+1)P(z)-w_jz^q
 \end{equation*}
 is constant, $i.e$ $(n+1)a_q= w_j$, where $P(z)=\frac{w_j}{n+1}z^q+c$, then equation \eqref{2q} becomes\\
 \begin{equation*}
 		A(z)e^c-B_j(z)-\sum_{\substack{k=1\\ k\neq j}}^{m}
 		B_k e^{(w_k-w_j)z^q}=0,
 	\end{equation*}
 all exponenets $0, (w_j-w_k)z^q (k \neq j)$ are all distincts as $w_k's$ are distincts, 	applying Borel's lemma \ref{imple1} yields \\
 \begin{equation*}
 		A(z)e^c-B_j(z)\equiv 0, B_k\equiv0  (k\neq j),
 \end{equation*}
 which is again a contradiction to the assumption that $B_k's$ are nonzero entire functions.
 \end{proof}
 	\begin{proof}[\textbf{\underline{Proof of Theroem 4}}] 
 		
 		Assume that $f(z)$ is a meromorphic solution of equation \eqref{eq:main} with $N(r,f)=S(r,f)$. We first prove that $f(z)$ is transcendental. Suppose to the contrary that $f(z)$ is a rational. Then the quantity
 		$f^nf'+P(z,f,f',...,f^(t))$	is a small function  of $B_0+B_1(z)e^{w_1z^q}+B_2(z)e^{w_2z^q}+...+B_m(z)e^{w_mz^q}$.  Applying lemma \ref{imple1} with the assumption $|w_1|>|w_2|>...>|w_m|$,  we obtain $B_j\equiv0 \; (j=1,2,...,m)$. This contradicts with the hypothesis that $B_j$'s are entire functions. Hence, $f(z)$ must be a transcendental meromorphic function. \\For notational convenience, set $P=P(z,f,f',...,f^(t))$. We next show that $\rho(f)\geq q$. Assume, for contradiction, that $\rho(f)<q$. Then $\rho(f^nf'+P)<q$. By comparing the growth of the characteristic functions on both sides of equation \eqref{eq:main} using lemma \ref{imple2}, we obtain a contradiction. Therefore, $\rho(f)\geq q$. Consequently, each $B_j (j=0,1,…,m)$ is a small function of $f$, because $B_j(z)$ is entire of order strictly less than $q$.
 		
 		First we discuss the case when $B_0(z)\equiv0$. Equation \eqref{eq:main} becomes 
 		\begin{equation}\label{1}
 			f^nf'+P=B_1(z)e^{w_1z^q}+B_2(z)e^{w_2z^q}+...+B_m(z)e^{w_mz^q}.
 		\end{equation}
 		Differentiating both sides of equation \eqref{1}, we get 
 		\begin{equation}\label{2}
 			 nf^{n-1}(f')^2+f^nf''+P'=(B'_1 +B_1w_1qz^{q-1})e^{w_1z^q} +...+(B'_m +B_mw_mqz^{q-1})e^{w_mz^q}.
 		\end{equation}
 		Eliminating $e^{w_1z^q}$ from equations \eqref{1}and \eqref{2}.\\
 		\textbf{Case(A)} $m=1$.
 		\begin{align*}
 			&(B'_1 +B_1w_1qz^{q-1})(f^nf'+P)-B_1( nf^{n-1}(f')^2+f^nf''+P')=0.\\& 
 			\text{ \textbf{(a).} Assume that}\\&
 B_1 (nf^{n-1}(f')^2+f^nf'')	-(B'_1 +B_1w_1qz^{q-1})f^nf'=(B'_1 +B_1w_1qz^{q-1})P-B_1P'\equiv0
 		\end{align*}
 		By some  calculations, we have $f^nf'=P$, and $ P=c_1e^{w_1z^q}B_1$. Hence, 
 		\begin{equation}\label{3}
 			f^nf'=c_1e^{w_1z^q}B_1.
 		\end{equation}
 		On integrating both sides, we have $$\frac{f^{n+1}}{n+1}=c_1e^{w_1z^q}K,$$ where $K(z)$ is an entire function  given by $ \int B_1(z) e^{w_1z^q}dz=e^{w_1z^q}K(z)$ and has order less than $q$. Hence, we have $$f=(CK)^{\frac{1}{n+1}}e^{\frac{w_1z^q}{n+1}},$$ where $C=c_1(n+1)$.
 		Substituting value of $f(z)$ in
 		\begin{align*}
 			P(z,f,f',...,f^{(t)})=& \sum_{\gamma}\alpha_{\gamma}(z)(f)^{i_0}(f')^{i_1}...(f^{(t)})^{i_t}\\&=\sum_{\gamma}\alpha_{\gamma}(z)\left(K_0e^{\frac{w_1z^q}{n+1}}\right)^{j_0}\left(K_1e^{\frac{w_1z^q}{n+1}}\right)^{j_1}...\left(K_te^{\frac{w_1z^q}{n+1}}\right)^{j_t}\\&=\sum_{\gamma}\alpha_{\gamma}(z)K_0^{j_0}K_1^{j_1}K_2^{j_2}...K_t^{j_t}e^{\frac{w_1z^q(j_0+j_1+...+j_t)}{n+1}}\\&=\sum_{\gamma}\alpha_{\gamma}(z)K_\gamma(z)e^{\frac{\gamma w_1z^q}{n+1}},
 			\end{align*}
 			where $$K_0=(CK)^{\frac{1}{n+1}},\qquad \text{and}\qquad K_{p+1}=K_p'+K_p\frac{w_1}{n+1}qz^{q-1},$$for $p=0,1,2,...,t$ such that $$K_{\gamma}=\sum_{j_0+j_1+...+j_t=\gamma}K_0^{j_0}K_1^{j_1}...K_t^{j_t}.$$  Since $K_0,K_1,...,K_t$ are entire functions of order  less than $q$, it follows that $\alpha_{\gamma}(z)$, and $K_{\gamma}(z)$ are small function of $f(z)$. Observe that $$0\leq \gamma=j_0+j_1+...+j_t \leq d\leq n-2(m=1),$$ which immediately yields $\frac{\gamma}{n+1}<1$. Substituting equation \eqref{3} in equation \eqref{eq:main} and using lemma \ref{imple1} 
 			 \begin{align*}
 				&c_1e^{w_1z^q}B_1+P=B_1e^{w_1z^q}\\&P=(1-c_1)B_1e^{w_1z^q},
 			\end{align*}
 			gives $c_1=1$, and $P(z,f,f',...,f^{(t)})\equiv 0$.\\
 			\textbf{(b).} Suppose that
 			\begin{equation*}
 				B_1 (nf^{n-1}(f')^2+f^nf'')	-(B'_1 +B_1w_1qz^{q-1})f^nf'=(B'_1 +B_1w_1qz^{q-1})P-B_1P'\not\equiv0.
 			\end{equation*}
 	Above equation can be written as
 	\begin{equation*}
 		f^{n-1}Q(z)=(B'_1 +B_1w_1qz^{q-1})P-B_1P',
 		\end{equation*}
 		where $Q(z)=nf'^2B_1+ff''B_1-ff'(B_1'+B_1w_1z^{q-1}q)$.
 	By lemma \ref{imple10}, the 	R.H.S of this equation is of atmost degree $d$, and $d\leq n-m-1=n-2(m=1)$. As $n\geq m+2=3$, it follows with the help of lemma \ref{imple 4} that $$m(r,Q)=S(r,f) \qquad m(r,fQ)=S(r,f).$$ Consequently, with the help of  $N(r,f)=S(r,f)$, we obtain $T(r,f)=S(r,f)$ , which is a contradiction.
 	Therfore for $m=1$ and  we have $f(z)=q(z)e^{\frac{w_1z^q}{n+1}}$, where $q(z)=((n+1)K)^{\frac{1}{n+1}}$, and $P\equiv0$.\\
 	\textbf{CaseB.}  $m\geq2$.\\ Set $B_1'+B_1w_1qz^{q-1}=S$, and eliminating $e^{w_1z^q}$,we get
 	\begin{align}\label{9}
 		nB_1f^{n-1}f'^2+&f^nf''B_1-f^nf'S+B_1P'-SP\\&\nonumber=(B_1B_2'-B_1'B_2+(w_2-w_1)B_1B_2qz^{q-1})e^{w_2z^q}+...\\&\nonumber+(B_1B_m'-B_1'B_m+(w_m-w_1)B_1B_mqz^{q-1})e^{w_mz^q}.
 	\end{align}
 	Set $Q_1=nB_1f^{n-1}f'^2+f^nf''B_1-f^nf'S, \quad T_1=B_1P'-SB,$ and
 	$B_{1i}=B_1B_i'-B_1'B_i+(w_i-w_1)B_1B_iqz^{q-1}$.
 	Equation \eqref{9} can be rewritten as 
 	\begin{equation}\label{10}
 		Q_1+T_1=B_{12}e^{w_2z^q}+B_{13}e^{w_3z^q}+...+B_{1m}e^{w_mz^q}.
 	\end{equation}
 	Consider the following subcases:\\
 \textbf{Subcase B(i)} If $Q_1\equiv0 \; \text{and} \; T_1\equiv0$, then by Borel's lemma \ref{imple1}, and equation  \eqref{10}, we have 
 $B_{1i}\equiv0 \ \forall i \in {1,2,...,m}$. Thus,
 \begin{equation*}
 B_1B_i'-B_1'B_i+(w_i-w_1)B_1B_iqz^{q-1}=0.
 \end{equation*}
 On integrating, we have
 $\frac{B_1}{B_i}=ce^{(w_i-w_1)z^q}$, where $c$ is a nonzero constant. Under the assumption that $\rho(B_i)<q \;\; \text{for} \; i=2,..,m$, it follows that $w_i-w_1=0$. However this is impossible because $|w_1|>|w_2|>...>|w_m|$.\\
\textbf{Subcase B(ii)}$Q_1\equiv0$ and $T_1\not \equiv0$.  So $Q_1=nB_1f^{n-1}f'^2+f^nf''B_1-f^nf'S\equiv0$. Now, from case(A) we have $f^nf'=c_1Ke^{w_1z^q}$. Substituting $f(z)$ in equation \eqref{eq:main} and \begin{align*}
		P(z,f,f',...,f^{(t)})=& \sum_{\gamma}\alpha_{\gamma}(z)K_\gamma(z)e^{\frac{\gamma w_1z^q}{n+1}}\\&=(1-c_1)B_1e^{w_1z^q}+B_2e^{w_2z^q}+...+B_me^{w_mz^q},
	\end{align*}
 where $\rho(\alpha_{\gamma})$, $\rho(K_{\gamma})<q$. As $\frac{\gamma}{n+1}<1$, $|w_1|>|w_2|>...>|w_m|$, and $B_i\not\equiv0$ for $j=2,3,...,m$. Applying Borel's lemma \ref{imple1},	we have $c_1=1$ and $\frac{\gamma_j}{n+1}w_1=w_j$, and $\alpha_{\gamma}(z)K_\gamma(z)=B_j$ $j\in \{2,3,...,m\}$ and there exist some $t \in \{2,3,...,m\}$ such that $\gamma_t=d$. Hence $f(z)=q(z)e^{\frac{w_1z^q}{n+1}}$, where $q(z)=((n+1)K)^{\frac{1}{n+1}}$, and $P=B_2e^{w_2z^q}+...+B_me^{w_mz^q}$.\\
 \textbf{Subcase B(iii)} $Q_1\not\equiv0$ and $T_1\equiv0$. Then from the definition of $T_1$, we have $P=c_2e^{w_1z^q}$,where $c_2$ is a nonzero constant. Combine it with equation \eqref{eq:main}, we have 
 \begin{equation*}
 	f^nf'=(1-c_2)B_1e^{w_1z^q}+B_2e^{w_2z^q}+...+B_me^{w_mz^q}.
 \end{equation*}
 Observe that $B_i\not \equiv0$ for $i=1,2,...,m$.\\
 \textbf{(a)} If $m=2$, then for $ c_2\neq1$, in view of the lemma \ref{3} the equation
 \begin{equation*}
 	f^nf'=(1-c_2)B_1e^{w_1z^q}+B_2e^{w_2z^q}
 	\end{equation*}
 	has no meromorphic solution with $N(r,f)=S(r,f)$. For $c_2=1$, we have $f^nf'=B_2e^{w_2z^q}$. This gives $f=(G(n+1))^{\frac{1}{n+1}}e^{\frac{w_2z^q}{n+1}}$, where $G$ is a nonzero entire function with order less than $q$. Substituting $f(z)$ in equation \eqref{eq:main} and $P(z)$, we have 
 	$$P(z,f,f',...,f^{(t)})=\sum_\gamma\alpha_{\gamma}(z)K_{\gamma}(z)e^{\frac{\gamma w_2z^q}{n+1}},$$ and $B_2e^{w_2z^q}+P=B_1e^{w_1z^q}+B_2e^{w_2z^q}$, where order of $K_\gamma$ and $\alpha_\gamma$ are less than $q$. As $\frac{\gamma}{n+1}<1$, and $|w_1|>|w_2|$, so lemma \ref{imple1}  gives $B_1\equiv0$, which is a contradiction.\\
 	\textbf{(b)} If $m\geq3$, then lemma \ref{3p} implies non existence of meromorphic solutions with $\lambda(f)<\rho(f)$ i.e $N(r,f)=S(r,f)$.\\
 	\textbf{Subcase B(iv)} $Q_1\not\equiv0$ and $T_1\not\equiv0$. Differentiating equation \eqref{10}
 	\begin{equation*}
 		Q_1'+T_1'=(B_{12}'+B_{12}w_2qz^{q-1})e^{w_2z^q}+...+(B_{1m}'+B_{1m}w_mqz^{q-1})e^{w_mz^q}
 	\end{equation*}
 	\textbf{(a)} If $m=2$, then eliminating $e^{w_2z^q}$, we have
 	 \begin{align*}
 		&(B_{12}'+B_{12}w_2qz^{q-1})(Q_1+T_1)-B_{12}(Q_1'+T_1')=0.  
 	\end{align*}
 If
 		\begin{equation}\label{11}
 			B_{12}Q_1'-(B_{12}'+B_{12}w_2qz^{q-1})Q_1=(B_{12}'+B_{12}w_2qz^{q-1})T_1-B_{12}T_1'\equiv0,
 		\end{equation} 
 then  from equations \eqref{10} and \eqref{11} according to \cite{ac}, we can solve to get 
 \begin{equation*}
 	f^nf'= c_3B_1e^{w_1z^q}+c_4B_2e^{w_2z^q},
 	\end{equation*}
 		where $c_3,c_4$ are constants and $c_4 \neq0$. Now, if $c_3\neq0$, then  lemma \ref{3} implies nonexistence of mermorphic solutions.
 		If $c_3=0$, then $f^nf'=c_4B_2e^{w_2z^q}$ and proceding as in case $(A)$, we have contradiction using Borel' lemma \ref{imple1}.\\
 	On the other hand if
 		\begin{equation}\label{12}
 			B_{12}Q_1'-(B_{12}'+B_{12}w_2qz^{q-1})Q_1=(B_{12}'+B_{12}w_2qz^{q-1})T_1-B_{12}T_1'\not\equiv0,
 		\end{equation}
 		then by the definition of $B_{1i}$ and lemma \ref{imple10}, the R.H.S of equation \eqref{12} is a differential polynomial in $f$ of atmost degree  $d\leq n-m-1=n-3$. However the L.H.S is a differential polynomial of degree $n+1$, where $n\geq m+2=4$ and contains the term $f^{n-2}$. Hence, we get 
 		\begin{equation}\label{13}
 			f^{n-2}R=B_{12}w_2qz^{q-1})T_1-B_{12}T_1'.
 			\end{equation}
 	Applying Lemma \ref{imple 4} to the equation \eqref{13} implies 
 	$$m(r,R)=S(r,f) \; \text{and} \; m(r,fR)=S(r,f).$$
 	Similar to case$(A)$, we have 
 	\begin{align*}
 		T(r,f)=m(r,f)+S(r,f)=m\left(r,\frac{fR}{R}\right)+S(r,f)=S(r,f).
 	\end{align*}
 	Hence, a contradiction.\\  
 	\textbf{(b)}If $m\geq3$, then eliminating $e^w_2z^q$, we have
 	\begin{align*}
 		&B_{12}Q_1'-(B_{12}'+B_{12}w_2qz^{q-1})Q_1-(B_{12}'+B_{12}w_2qz^{q-1})T_1+B_{12}T_1'=\\&(B_{12}B_{13}' -B_{12}'B_{13}+(w_3-w_2)B_{12}B_{13}qz^{q-1})e^{w_3z^q}+\\&...+
 		(B_{12}B_{1m}' -B_{12}'B_{1m}+(w_m-w_2)B_{12}B_{1m}qz^{q-1})e^{w_mz^q}).
 	\end{align*}
 	Set $Q_2=B_{12}Q_1'-(B_{12}'+B_{12}w_2qz^{q-1})Q_1$, $T_2=B_{12}T_1'-B_{12}w_2qz^{q-1})T_1$, and $B_{2i}=(B_{12}B_{1m}'-B_{12}'B_{1i}+(w_i-w_2)B_{12}B_{1i}qz^{q-1})$.
 	Then equation  can be written as
 	\begin{equation*}
 		Q_2+T_2=B_{23}e^{w_3z^q}+...+B_{2m}e^{w_mz^q}.
 	\end{equation*}
 Now, we consider the four subcases  $Q_2\equiv0 \; \text{and} \; T_2\equiv0$, $Q_2\equiv0  \; \text{and}\;  T_2\not\equiv0$,  $Q_2\not\equiv0 \; \text{and} \; T_2\equiv0$, and $Q_2\not\equiv0 \;\text{and}  \;  T_2\not\equiv0$ as in Case $(B)$ and obtain a contradiction in first three cases and in fourth subcase again differentiating and eliminating $e^{w_3z^q}$, using similar arguments upto $m^{th}$ stage, we have
 \begin{equation}\label{14}
 	Q_{m-1}+T_{m-1}=B_{m-1,m}e^{w_mz^q}.
 	\end{equation}
 	where $Q_{m-1}=B_{m-2,m-2}Q_{m-2}'-(B_{m-2,m-1}'+B_{m-2,m-1}w_{m-1}qz^{q-1})Q_{m-2}$,\\$T_{m-1}=B_{m-2,m-2}T_{m-2}'-(B_{m-2,m-1}'+B_{m-2,m-1}w_{m-1}qz^{q-1})T_{m-2},$ and \\$B_{m-1,m}=(B_{m-2,m-1}B_{m-2,m}'-B_{m-2,m-1}'B_{m-2,m}+(w_m-w_{m-1})B_{m-2,m-1}B_{m-2,m}qz^{q-1})$.
 	\\ Similarly, we define four cases here:\\
 	\textbf{(i)} $Q_{m-1}\equiv0$ and $T_{m-1}\equiv0$.\\
 	Equation \eqref{14} implies \begin{equation*}
 		B_{m-1,m}=(B_{m-2,m-1}B_{m-2,m}'-B_{m-2,m-1}'B_{m-2,m}+(w_m-w_{m-1})B_{m-2,m-1}B_{m-2,m}qz^{q-1})=0,
 	\end{equation*}
 	and integrating it we have 
 	\begin{equation*}
 		\frac{B_{m-2,m}}{B_{m-2,m-1}}=de^{(w_{m-1}-w_m)z^q}, 
 	\end{equation*}
 	where $d$ is a non zero constant. As $\rho(B_i)<q$ implies $\rho(B_{m-2,m})<q$, $\rho(B_{m-2,m-1})<q$. we get $(w_{m-1}-w_m)=0,$ which is not possible as all $w_i$'s are distincts.\\
 		\textbf{(ii)} $Q_{m-1}\equiv 0$ and $T_{m-1}\not\equiv0$.\\
 		By the similar argument as in  subcase B(ii), we have 
 		\begin{equation}\label{15}
 			f^nf'=c_1B_1e^{w_1z^q}+c_2B_2e^{w_2z^q}+...+c_{m-1}B_{m-1}e^{w_{m-1}z^q},			\end{equation} 
 			where $c_i$'s are constants and $c_{m-1}$ is nonzero. From lemma \ref{3p} it follows that $c_1=c_2=...=c_{m-2}=0$, otherwise equation \eqref{15} has no meromorphic solution. we get $f^nf'=c_{m-1}B_{m-1}e^{w_{m-1}z^q}$ and hence,
 			\begin{align*}
 				P=P(z,f,f',...,f^{(t)})=&\sum_{\gamma}\alpha_{\gamma}(z)K_{\gamma}(z)e^{\frac{\gamma w_{m-1}z^q}{n+1}}\\&= B_1e^{w_1z^q}+...+(1-c_{m-1})B_{m-1}e^{w_{m-1}z^q}+B_me^{w_mz^q}.
 			\end{align*}
 			Applying Borel's lemma \ref{imple1} in the above equation  we  get $B_1=...=B_{m-2}=0$, a contradiction to the assumption.\\
 			\textbf{(iii)} $Q_{m-1}\not \equiv 0$ and $T_{m-1}\equiv0$.\\
 			Similarly as in the proof of subcase B(iii), we have 
 			\begin{equation*}
 				P=d_1B_1e^{w_1z^q}+d_2B_2e^{w_2z^q}+...+d_{m-1}e^{w_{m-1}z^q},
 			\end{equation*}
 			where $d_i$ $(i=1,2,...,m-1)$ are constants with $d_{m-1}\neq0$. With the help of equation \eqref{eq:main}, we have 
 			\begin{equation}\label{15}
 				f^nf'=(1-d_1)B_1e^{w_1z^q}+...+(1-d_{m-1})B_{m-1}e^{w_{m-1}z^q}+B_me^{w_mz^q}.
 			\end{equation}
 			By Lemma \ref{3p}, the equation \eqref{15} cannot have meromorphic solutions unless $d_1=d_2=\cdots=d_{m-1}=1$.Therfore, $f^nf'=B_me^{w_mz^q}$. Now putting $f$ in equation \eqref{eq:main} and in $P(z)$, we have
 			\begin{align*}
 				P=P(z,f,f',...,f^{(t)})=&\sum_{\gamma}\alpha_{\gamma}(z)K_{\gamma}(z)e^{\frac{\gamma w_{m}z^q}{n+1}}\\&=B_1e^{w_1z^q}+...+B_{m-1}e^{w_{m-1}z^q}.
  			\end{align*}
 			In similar manner by Borel's lemma \ref{imple1}, we get $B_1=B_2=...=B_{m-1}=0$, which yields a contradiction.\\
 			\textbf{(iv)} $Q_{m-1}\not \equiv 0$ and $T_{m-1}\not\equiv0$.
 			\\Differentiating equation \eqref{14}, we have
 			\begin{equation}\label{16}
 				Q_{m-1}'+T_{m-1}'=(B_{m-1,m}'+B_{m-1,m}w_mqz^{q-1})e^{w_mz^q},
 			\end{equation}
 			eliminating $e^{w_mz^q}$ from equations \eqref{14} and \eqref{16}, we have
 			\begin{align*}
	&B_{m-1,m}Q'_{m-1}-B'_{m-1,m}+B_{m-1,m}w_mqz^{q-1})Q_{m-1}
 		\\&=(B'_{m-1,m}+B_{m-1,m}w_mqz^{q-1})T_{m-1}-B_{m-1,m}T'_{m-1}.
 		\end{align*}
 		Assume that \begin{align}\label{A}
 			&B_{m-1,m}Q'_{m-1}-B'_{m-1,m}+B_{m-1,m}w_mqz^{q-1})Q_{m-1}
 			\\&\nonumber=(B'_{m-1,m}+B_{m-1,m}w_mqz^{q-1})T_{m-1}-B_{m-1,m}T'_{m-1}\equiv0.
 		\end{align}
 		From the definition of $B_{m-1,m}$ and above equation, we get 
 		\begin{equation*}
 			f^nf'=t_1B_1e^{w_1z^q}+...+t_mB_me^{w_mz^q},
 		\end{equation*}
 		where $t_i$'s are constants and $t_m\neq 0$. Lemma \ref{3p} concludes $t_1 = t_2=...=t_{m-1}=0$, hence $f^nf'=t_mB_me^{w_mz^q}$,  similarly we have 
 		\begin{align*}
 			P=P(z,f,f',...,f^{(t)})=&\sum_{\gamma}\alpha_{\gamma}(z)K_{\gamma}(z)e^{\frac{\gamma w_{m}z^q}{n+1}}\\&=B_1e^{w_1z^q}+...+B_{m-1}e^{w_{m-1}z^q}+(1-t_m)B_me^{w_mz^q},
 		\end{align*}
 		and again Borel's lemma \ref{imple1} implies $B_1=B_2=...=B_{m-1}\equiv0$, which is a contradiction to the assumption.
 		Now, assume 
 		 \begin{align*}
 			&B_{m-1,m}Q'_{m-1}-(B'_{m-1,m}+B_{m-1,m}w_mqz^{q-1})Q_{m-1}
 			\\&=(B'_{m-1,m}+B_{m-1,m}w_mqz^{q-1})T_{m-1}-B_{m-1,m}T'_{m-1}\not\equiv0.
 			\end{align*}
 		Using lemma \ref{imple10}, the right hand side of  equation \ref{A} is a differential polynomial in $f$ of degree atmost $n-m-1$ and left hand side is of degree $n+1$ with $n\geq m+2$ and contains a  factor $f^{n-m}$. Hence,
 		\begin{equation*}
 			f^{n-m}R_{m-1}=B_{m-1,m}Q'_{m-1}-(B'_{m-1,m}+B_{m-1,m}w_mqz^{q-1})Q_{m-1}.
 		\end{equation*} From Clunie's lemma \ref{imple 4}, it follows that
 		\begin{equation*}
 			m(r,R_{m-1})=S(r,f) \; \text{and} \; m(r,fR_{m-1})=S(r,f).
 		\end{equation*}
 			Thus, $T(r,f)=S(r,f)$, which is impossible. This contradiction completes the proof for $B_0(z) \equiv0$.\\
 			Now consider the case $B_0(z)\not \equiv0$. \\
 			Define $P_1(z):=P(z,f,f',...,f^{(t)})-B_0$. Then $P_1(z)$ is a differential polynomial in $f$ of degree $d_1\leq n-m-1$. Consequently, equation \eqref{eq:main} can be rewritten as
 			 \begin{equation*}
 				f^nf'+P_1=B_1e^{w_1z^q}+...+B_me^{w_mz^q}.
 			\end{equation*} 
 			Applying  the same arguments as in the case for $B_0(z)\equiv0$, we obtain the desired conclusion.
 		 		 	 		\end{proof}

\end{document}